\newtheorem{theorem}{Theorem}[section]
\newtheorem{lemma}[theorem]{Lemma}
\theoremstyle{definition}
\newtheorem{remark}{Remark}
\newtheorem*{thm*}{Theorem}
\newtheorem{defn}{Definition}[section]
\theoremstyle{plain}
\newtheorem{cor}[defn]{Corollary}
\newtheorem{prop}[defn]{Proposition}
\newtheorem{thm}[defn]{Theorem}
\newtheorem{conjecture}{Conjecture}
\theoremstyle{definition}
\newcommand{\set}[1]{\left\{#1\right\}}
\newcommand{\paren}[1]{\left(#1\right)}
\newcommand{\R}{\mathbb{R}}
\newcommand{\eng}{\mathscr{E}}
\newcommand{\E}{\mathsf{E}}
\newcommand{\T}{\mathsf{T}}
\newcommand{\flux}{\operatorname{flux}}
\newcommand{\inviz}[1]{#1}
\def\s-s{self-similar}
\renewcommand{\div}{\operatorname{div}}
\numberwithin{equation}{section}
	\title[Barlow--Bass resistance estimates for hexacarpet]{Dual graphs and modified  Barlow--Bass resistance estimates for repeated barycentric subdivisions}
\author[D. J. Kelleher, H. Panzo, A. Brzoska and A. Teplyaev]{}
\email{dkellehe@mtholyoke.edu}
\email{hugo.panzo@uconn.edu}
\email{antoni.brzoska@uconn.edu}
\email{alexander.teplyaev@uconn.edu}
\subjclass[2010]{60J35, 37F40, 81Q35, 28A80,   31E05, 35K08}
\keywords{Repeated barycentric subdivision,
random walk,
Sierpinski carpet,
Strichartz hexacarpet,	
Barlow--Bass resistance estimate,
spectral dimension,
self-similar Dirichlet form.}
\urladdr{\url{https://djkelleher.wordpress.com/}}
\urladdr{\url{http://www.math.uconn.edu/~panzo/}}
\urladdr{\url{http://www.math.uconn.edu/~teplyaev/}}
\thanks{Research supported in part by NSF grants  DMS 1106982, 1262929, 1613025.}
\begin{document}
	\maketitle

\centerline{\scshape Daniel J.~Kelleher}

\medskip
{\footnotesize
	\centerline{Department of Mathematics and Statistics		
		}
	\centerline{Mount Holyoke College,
		South Hadley, MA 01075, USA}
}

\medskip

\centerline{\scshape Hugo Panzo, Antoni Brzoska and Alexander Teplyaev}

\medskip
{\footnotesize
	\centerline{Department of Mathematics}
	\centerline{University of Connecticut, Storrs, CT 06269, USA}
}

%

\begin{abstract}
We prove Barlow--Bass type resistance estimates for two random walks associated with repeated barycentric subdivisions of a triangle. If the random walk jumps between the centers of  triangles in the subdivision that have common sides, the resistance scales as a power of a constant $\rho$ which is theoretically estimated to be in the interval $5/4\leqslant\rho\leqslant3/2$, with a  numerical estimate $\rho\approx1.306$. This corresponds to the theoretical estimate of spectral dimension $d_S$ between 1.63 and 1.77, with a  numerical estimate $d_S\approx1.74$.  On the other hand, if the random walk jumps between the corners of  triangles in the subdivision, then the resistance  scales as a power of  a constant $\rho^T=1/\rho$, which is theoretically estimated to be in the interval $2/3\leqslant\rho^T\leqslant4/5$. This  corresponds to the spectral dimension between 2.28 and 2.38.
The difference between
$\rho$ and $\rho^T$ implies that the
the limiting behavior of random walks on the
repeated barycentric subdivisions is more delicate than   on  the
generalized Sierpinski Carpets, and suggests interesting possibilities for further research, including possible non-uniqueness of self-similar Dirichlet forms.
%
\end{abstract}

\section{Introduction}\label{sec-intro}

There has been an wide interest in studying analysis and random processes on various metric measure spaces that satisfy either the volume doubling property, or curvature bounds, or both.
One part  of this very large literature deals with spaces  where Lipchitz functions can be analyzed. Without even attempting to suggest a representative sample of relevant papers, we briefly mention such recent works as
\cite{HLTV16, Am, book, BK16}.
Another kind of more probabilistically inspired analysis   deals with
spaces that are more fractal in nature and have sub-Gaussian heat kernel estimates
(see \cite{Barlow,BB90,BBS90,BB99,BBKT,Bass} and references therein). Typically for such fractal examples, Lipschitz functions play little or no role, as intrinsically smooth functions are only H\"older continuous. In some sense all these results are related to the Nash-Moser theory of uniformly elliptic operators. However, there are natural spaces that have no volume doubling, no curvature bounds, and no heat kernel estimates. Analysis of such spaces is in its infancy, and considering even simplest examples is very challenging.
After laying some of the initial framework for this model, our aim is to connect to a series of other works, such as
\cite{SteinhurstBond,SteinhurstPOTA,HSTZjulia,KSWpcf,SteinhurstQM,KW1,KW2}.

\begin{figure}[t]
\inviz{\begin{tikzpicture}[scale=.5000]

\draw (90:4)--(210:4)--(330:4) -- cycle;

\end{tikzpicture}
\
\begin{tikzpicture}[scale=.5000]

\foreach \a in {0,1,2}{
\draw ($(90+120*\a:4)$)--($(-90+120*\a:2)$);
}

\draw (90:4)--(210:4)--(330:4) -- cycle;
\end{tikzpicture}
\
\begin{tikzpicture}[scale=.5000]

\foreach \a in {0,1,2}{
\draw ($(90+120*\a:4)$)--($(-90+120*\a:2)$);
\draw (0:0)--($.5*(90+120*\a:4)+.5*(30+120*\a:2)$);
\draw (0:0)--($.5*(-30+120*\a:4)+.5*(30+120*\a:2)$);
\draw ($(90+120*\a:4)$)--($(30+120*\a:1)$);
\draw ($(90+120*\a:4)$)--($(150+120*\a:1)$);
\draw ($(-30+120*\a:2)$)--($(-90+120*\a:2)$);
\draw ($(210+120*\a:2)$)--($(-90+120*\a:2)$);
}

\draw (90:4)--(210:4)--(330:4) -- cycle;
\end{tikzpicture}}
\caption{Barycentric subdivision of a 2-simplex, the graphs $G_0^T$, $G_1^T$ and $G_2^T$.}
\label{barycentricsubdivision1}
\end{figure}
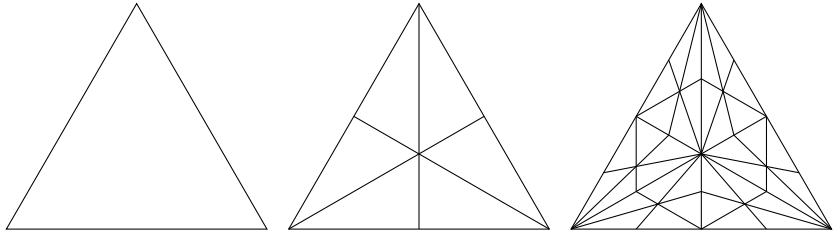
\begin{figure}
\begin{tikzpicture}[scale=.9000]
\draw 
(90:4cm)--(210:4cm)--(330:4cm) -- cycle;

\foreach \a in {0,1,2}{
\draw 
($(120*\a+90:4cm)$) -- ($(120*\a-90:2cm)$);
}

\draw 
(30:1cm)--(90:4cm)--(150:1cm)--(210:4cm)--(270:1cm)--(330:4cm) -- cycle;
\draw 
(-30:2cm)--(-90:2cm)--(-150:2cm)--(-210:2cm)--(-270:2cm)--(-330:2cm) -- cycle;

\foreach \a in{0,2,4}{
\draw (0,0)--($(60*\a-10:2.6cm)$);
}

\foreach \a in{1,3,5}{
\draw 
(0,0)--($(60*\a+10:2.6cm)$);
}

\foreach \a in{0,2,4}{
\draw[fill=black] ($.111*(150+60*\a:2cm)+.111*(90+60*\a:4cm) + .333*(90 + 60*\a:2cm) $) circle (3.3333pt);
\draw[fill=black] ($.111*(150+60*\a:2cm)+.111*(90+60*\a:4cm) + .333*(90 + 60*\a:2cm)+.333*(90 + 60*\a:4cm) $) circle (3.3333pt);
\draw[fill=black] ($.111*(150+60*\a:2cm)+.111*(90+60*\a:4cm) + .333*(90 + 60*\a:4cm) +.333*(60*\a+110:2.6cm)$) circle (3.3333pt);

\draw[fill=black] ($.111*(150+60*\a:2cm)+.111*(90+60*\a:4cm) + .333*(150 + 60*\a:1cm) $) circle (3.3333pt);
\draw[fill=black] ($.111*(150+60*\a:2cm)+.111*(90+60*\a:4cm) + .333*(150 + 60*\a:2cm) + .333*(150 + 60*\a:1cm) $) circle (3.3333pt);
\draw[fill=black] ($.111*(150+60*\a:2cm)+.111*(90+60*\a:4cm) + .333*(150 + 60*\a:2cm) +.333*(60*\a+110:2.6cm)$) circle (3.3333pt);
}
\foreach \a in{0,2,4}{
\draw [very thick]
($.111*(150+60*\a:2cm)+.111*(90+60*\a:4cm) + .333*(90 + 60*\a:2cm) $)--
($.111*(150+60*\a:2cm)+.111*(90+60*\a:4cm) + .333*(90 + 60*\a:2cm) + .2*(0 + 60*\a:2cm) $)-- %
($.111*(150+60*\a:2cm)+.111*(90+60*\a:4cm) + .333*(90 + 60*\a:2cm)  $)-- %
 ($.111*(150+60*\a:2cm)+.111*(90+60*\a:4cm) + .333*(90 + 60*\a:2cm)+.333*(90 + 60*\a:4cm) $) --
 ($.111*(150+60*\a:2cm)+.111*(90+60*\a:4cm) + .333*(90 + 60*\a:2cm)+.333*(90 + 60*\a:4cm) + .2*(0 + 60*\a:2cm) $) -- %
 ($.111*(150+60*\a:2cm)+.111*(90+60*\a:4cm) + .333*(90 + 60*\a:2cm)+.333*(90 + 60*\a:4cm) $) -- %
 ($.111*(150+60*\a:2cm)+.111*(90+60*\a:4cm) + .333*(90 + 60*\a:4cm) +.333*(60*\a+110:2.6cm)$)  --
($.111*(150+60*\a:2cm)+.111*(90+60*\a:4cm) + .333*(150 + 60*\a:2cm) +.333*(60*\a+110:2.6cm)$) --
 ($.111*(150+60*\a:2cm)+.111*(90+60*\a:4cm) + .333*(150 + 60*\a:2cm) + .333*(150 + 60*\a:1cm) $) --
 ($.111*(150+60*\a:2cm)+.111*(90+60*\a:4cm) + .333*(150 + 60*\a:2cm) + .333*(150 + 60*\a:1cm) + .4*(240 + 60*\a:2cm) $) -- %
 ($.111*(150+60*\a:2cm)+.111*(90+60*\a:4cm) + .333*(150 + 60*\a:2cm) + .333*(150 + 60*\a:1cm) $) -- %
 ($.111*(150+60*\a:2cm)+.111*(90+60*\a:4cm) + .333*(150 + 60*\a:1cm) $) --
 ($.111*(150+60*\a:2cm)+.111*(90+60*\a:4cm) + .333*(150 + 60*\a:1cm) + .4*(240 + 60*\a:2cm) $) -- %
 ($.111*(150+60*\a:2cm)+.111*(90+60*\a:4cm) + .333*(150 + 60*\a:1cm) $) -- %
 cycle;
}

\begin{scope}[yscale=1,xscale=-1]

\foreach \a in{0,2,4}{
\draw[fill=black] ($.111*(150+60*\a:2cm)+.111*(90+60*\a:4cm) + .333*(90 + 60*\a:2cm) $) circle (3.3333pt);
\draw[fill=black] ($.111*(150+60*\a:2cm)+.111*(90+60*\a:4cm) + .333*(90 + 60*\a:2cm)+.333*(90 + 60*\a:4cm) $) circle (3.3333pt);
\draw[fill=black] ($.111*(150+60*\a:2cm)+.111*(90+60*\a:4cm) + .333*(90 + 60*\a:4cm) +.333*(60*\a+110:2.6cm)$) circle (3.3333pt);

\draw[fill=black] ($.111*(150+60*\a:2cm)+.111*(90+60*\a:4cm) + .333*(150 + 60*\a:1cm) $) circle (3.3333pt);
\draw[fill=black] ($.111*(150+60*\a:2cm)+.111*(90+60*\a:4cm) + .333*(150 + 60*\a:2cm) + .333*(150 + 60*\a:1cm) $) circle (3.3333pt);
\draw[fill=black] ($.111*(150+60*\a:2cm)+.111*(90+60*\a:4cm) + .333*(150 + 60*\a:2cm) +.333*(60*\a+110:2.6cm)$) circle (3.3333pt);
}
\foreach \a in{0,2,4}{
\draw [very thick]
($.111*(150+60*\a:2cm)+.111*(90+60*\a:4cm) + .333*(90 + 60*\a:2cm) $)--
 ($.111*(150+60*\a:2cm)+.111*(90+60*\a:4cm) + .333*(90 + 60*\a:2cm)+.333*(90 + 60*\a:4cm) $) --
 ($.111*(150+60*\a:2cm)+.111*(90+60*\a:4cm) + .333*(90 + 60*\a:4cm) +.333*(60*\a+110:2.6cm)$)  --
($.111*(150+60*\a:2cm)+.111*(90+60*\a:4cm) + .333*(150 + 60*\a:2cm) +.333*(60*\a+110:2.6cm)$) --
 ($.111*(150+60*\a:2cm)+.111*(90+60*\a:4cm) + .333*(150 + 60*\a:2cm) + .333*(150 + 60*\a:1cm) $) --
 ($.111*(150+60*\a:2cm)+.111*(90+60*\a:4cm) + .333*(150 + 60*\a:1cm) $) -- cycle;
}

\end{scope}

\end{tikzpicture}\caption{Adjacency (dual) graph $G_2$, in bold,
and the barycentric subdivision graph
pictured together with the thin image of $G_2^T$.
}\label{barycentricsubdivision2}
\end{figure}
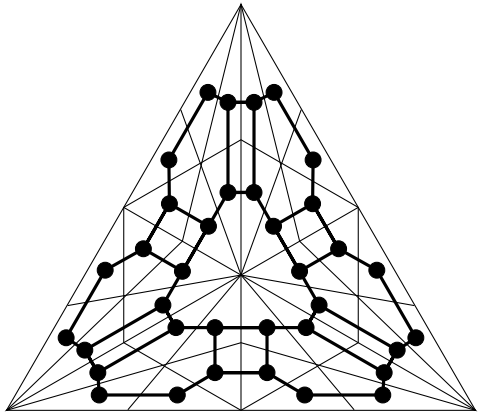

\begin{figure}[htb]
\includegraphics[height=2in]{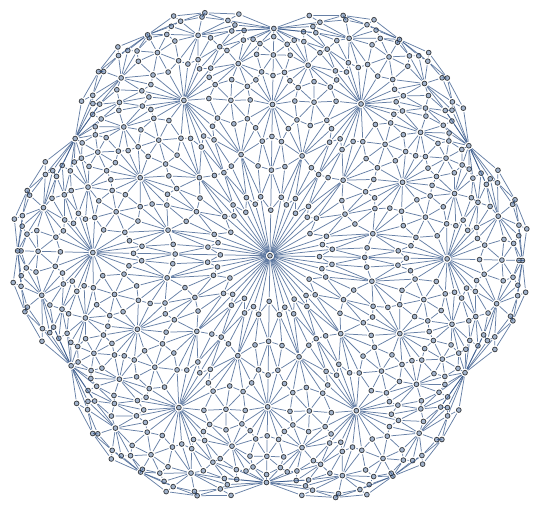}
\includegraphics[height=2in]{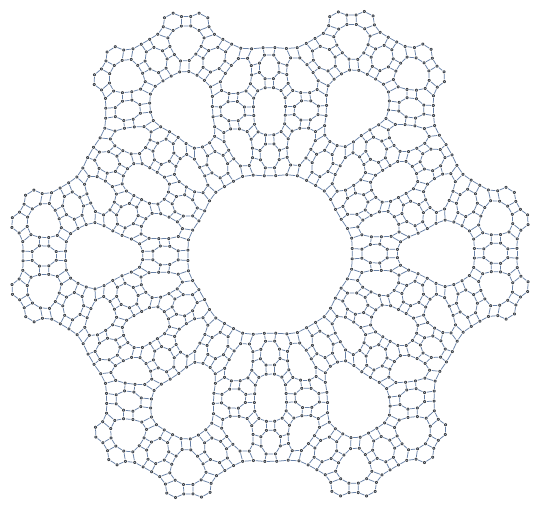}
\caption{
On the left: the graph  $G_4^T$ for barycentric subdivision of a 2-simplex.
On the right: the adjacency (dual) graph $G_4$.}
\label{Gn}
\end{figure}

The repeated barycentric subdivision of a simplex is a classical and fundamental notion from algebraic topology, see \cite[and references therein]{Hatcher}.
Recently it was considered from a probabilistic point of view in \cite{DF99,DM11,DMcM,DMic,Volkov} and graph theory point of view in \cite{Knill1,Knill2}.
Understanding how resistance scales on finite approximating graphs is the first step to developing analysis on fractals and fractal-like structures, such as self-similar graphs and groups, see \cite[and references therein]{BGN,GN,Kai05,NT08,Nek05}.
For finitely ramified post-critically finite fractals,
including nested fractals,
the resistance scales by the same factor between any two levels of approximating graphs
(see \cite[and references therein]{Kig93,Kig01,Lind}),
and this fact can be used  to prove the existence and uniqueness of a Dirichlet form on the limiting fractal structures.
In the infinitely ramified case,  resistance estimates are more difficult to obtain,
but are just as important to understanding diffusions on fractals.
Barlow and Bass \cite{Barlow,BB90,BBS90,BB99,BBKT,Bass} proved such estimates for the Sierpinski carpet and its generalizations. These techniques were extended to understanding resistance estimates between more complicated regions of the Sierpinski carpet, see \cite{McG02}.
The paper \cite{KZ92} provides another technique for proving the existence of Dirichlet forms  on non-finitely ramified self-similar fractals, which estimates the parameter $\rho$ by studying the Poincar\'e inequalities on the  approximating graphs of the fractals.
The long term motivation for our work comes from
probability and analysis on fractals
\cite{Barlow98,RT,Str1,Str2,Str3,Strbook},
vector analysis for Dirichlet forms
\cite{HT12,HKT12,HKT13,HRT,HT15,IRT12,LS14},
and especially from the works on the heat kernel estimates
\cite{BB90,BB99,BBKT,GH,GT,Kajino,Kajino13,Ki1,Ki2,Ki3,Li86,LY86,Telcs}.

In general terms, a Dirichlet form on a fractal is a bilinear form which is analogous to the classic Dirichlet energy on $\R^d$ given by
$
\eng(f) = \int |\nabla f|^2 \ dx$.
Dirichlet forms have many applications in geometry, analysis and probability. The theory of Dirichlet forms is equivalent, in a certain sense, to the theory of symmetric Markov processes,
see \cite{BH,ChenF,FOT11}.
The potential theoretic properties of the Dirichlet form have implications for this
stochastic  process. In particular, the resistance between two boundary sets is related to   the crossing times.
In the discrete setting, the Dirichlet form is the graph energy. In this case the resistance between two sets is determined  using Kirchhoff's laws. 
 For a more thorough introduction to these topics, one can see, for example, \cite{DS84,Lyo14}.

Although the results of Barlow and Bass et al are applicable to a large class of fractals,
we concentrate on one prototypical but difficult  to analyze    generalization of the classical  Sierpinski carpet.
Our work further develops existing techniques to
obtain resistance scaling estimates for the 1-skeleton of $n$-times iterated barycentric subdivisions of a triangle which we will denote $G^T_n$, and its weak dual the hexacarpet (introduced in \cite{BKN+12}), which we will denote $G_n$.  In our case, on the 1-skeleton $G_n^T$, the Markov process jumps between corners of the triangles in the subdivision. Our theoretical estimates correspond to a process with limiting spectral dimension between 2.28 and 2.38.
The Markov process on the hexacarpet graphs (which are denoted by $G$ and $G^H$ later on) corresponds to a random walk which jumps between the centers of these triangles, with spectral dimension between 1.63 and 1.77 ($\approx 1.74$ using the  numerical estimates in \cite{BKN+12}). This is a substantial difference implying, in particular, that one Markov process
is not
recurrent, while the other is
recurrent.
From the point of view of fractal analysis, our results suggest that
the   corresponding self-similar diffusion is not unique,
unlike \cite{HMT06,BBKT}.

If  $R_n^T$ and $R_n$ is the resistance between the appropriate boundaries in $G_n^T$ and $G_n^H$ respectively (see Figures~\ref{barycentricsubdivision1}, \ref{barycentricsubdivision2},
\ref{Gn}, \ref{boundaryproblem}), then we prove that the resistance $R_n^T$ and $R_n$ scale by constants $\rho^T$ and $\rho$ respectively, and obtain estimates on these constants. Note that in the current work the hexacarpet graph  $G^H_n$ is a modification of $G_n$ by adding a set of ``boundary'' vertexes. Our main result is the following theorem.

\begin{thm} \label{mainthm}
The resistances across graphs
$G_n^T$ and $G_n^H$
(defined in Subsection~\ref{sec-problem})
are reciprocals, that is
$
R^T_n = 1/R_n
$,
and  the asymptotic limits
\[
\log\rho^T = \lim_{n\to\infty} \frac1n \log R_n^T
\quad\text{\ \ and\ \ }\quad
\log\rho = \lim_{n\to\infty} \frac1n \log R_n
\]
exist (and $\rho^T=1/\rho$). Furthermore,
$
2/3\leq \rho^T   \leq 4/5$ and
$
5/4 \leq \rho \leq 3/2$.
\end{thm}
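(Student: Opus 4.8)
The plan is to treat the four assertions separately, since they have quite different characters: the reciprocity $R_n^T = 1/R_n$ and the relation $\rho^T = 1/\rho$ are exact consequences of planar duality, the existence of the two limits is a soft subadditivity argument, and only the numerical window $5/4 \le \rho \le 3/2$ requires the genuinely hard, ``modified Barlow--Bass'' work.

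First I would establish the reciprocity $R_n^T = 1/R_n$ directly from the planar duality of electrical networks. Since $G_n^H$ is (a boundary-modified version of) the weak planar dual of $G_n^T$, each edge of one graph crosses exactly one edge of the other, and unit resistances are self-dual under this correspondence. The classical fact is that for a finite planar network the effective resistance between two complementary boundary arcs equals the effective conductance between the corresponding arcs of the dual network --- that is, the reciprocal of the effective resistance measured there. The only thing to check is that the boundary vertices and the ``left/right'' versus ``top/bottom'' source--sink conventions of Section~\ref{sec-problem} are matched, so that the dual of the cut problem for $G_n^T$ is exactly the flow problem for $G_n^H$. Once this bookkeeping is done, $R_n^T R_n = 1$ for every $n$. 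Granting the existence of the limits, the relation $\rho^T = 1/\rho$ is then immediate, since $\log \rho^T = \lim_n \tfrac1n \log R_n^T = -\lim_n \tfrac1n \log R_n = -\log\rho$.

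For existence of the limits I would exploit the self-similar cell structure: one barycentric subdivision replaces each smallest triangle of $G_n^T$ by a scaled copy of the generating pattern, so that $G_{n+m}^T$ is obtained by substituting a copy of $G_m^T$ into each cell of $G_n^T$. Standard series/parallel reductions together with shorting and cutting monotonicity then yield a submultiplicative bound $R_{n+m}^T \le C\, R_n^T R_m^T$ and a matching supermultiplicative bound $R_{n+m}^T \ge c\, R_n^T R_m^T$, where $C, c > 0$ depend only on the bounded combinatorics of a single cell boundary, not on $n,m$. Absorbing the constant (set $S_n = C R_n^T$, so that $\log S_n$ is genuinely subadditive), Fekete's lemma gives $\lim_n \tfrac1n \log R_n^T = \inf_n \tfrac1n \log S_n$, while the supermultiplicative estimate keeps this limit finite and nonzero. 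The same argument applies verbatim to $R_n$, or one simply transports it through the reciprocity.

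The real work is the numerical window, and here is where the Barlow--Bass technique must be modified. The goal is a one-step comparison $\mu \le R_{n+1}^T / R_n^T \le \lambda$ valid for all large $n$, with the explicit values $\mu=2/3$ and $\lambda=4/5$; combined with the already-established existence of the limit, such increment bounds force $\log\rho^T \in [\log\mu,\log\lambda]$, because $\tfrac1n\log R_n^T$ is an average of the increments $\log(R_{k+1}^T/R_k^T)$. The upper bound on the growth rate I would obtain from Dirichlet's principle, exhibiting an explicit unit flow across $G_{n+1}^T$ assembled from flows through the individual cells, whose total energy is bounded by a fixed multiple of $R_n^T$; the lower bound I would obtain from the dual (Thomson) variational principle using an explicit potential, equivalently by cutting $G_{n+1}^T$ down to a small network whose resistance is computable in closed form. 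The main obstacle throughout is that these fractals are infinitely ramified: a cell meets its neighbors along a boundary containing many vertices, so the effective resistance of a cell is not a single scalar and the renormalization is not one-dimensional. Controlling how the crossing current is permitted to distribute among the boundary vertices of each cell --- precisely what the modification of Barlow--Bass must supply --- is what pins the constants to the interval $[5/4,3/2]$ rather than a single value, and I expect the sharp evaluation of the relevant small networks, together with verifying that the candidate flow and potential are genuinely admissible across cell interfaces, to be the most delicate part of the argument.
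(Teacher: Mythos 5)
Your reciprocity argument matches the paper's: identify each boundary pair to single vertices $a,b$, add an edge joining them, and invoke planar duality (the paper uses Proposition 9.4 of \cite{Lyo14} together with its Proposition \ref{quotientgraphs}), and the relation $\rho^T=1/\rho$ then follows from the limits exactly as you say. The genuine gap is in your second step: you treat the multiplicative estimate as ``a soft subadditivity argument'' obtainable from ``standard series/parallel reductions together with shorting and cutting monotonicity,'' with constants depending only on the combinatorics of a single cell boundary. For this infinitely ramified structure no such soft argument exists --- the copies $F_\omega G^H_n$ inside $G^H_{n+m}$ meet along interfaces containing unboundedly many vertices, and the energy-minimizing flows of adjacent copies need not be compatible across an interface, so monotonicity alone gives you nothing of the form $R_{n+m}\leq C\,R_nR_m$. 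This is precisely where the modified Barlow--Bass work of the paper lives (its entire Section \ref{sec-upper}): the minimizing unit flow $I^n$ is symmetrized into two flows $H^n_{01},H^n_{02}$ whose superpositions $a_1(x)H^n_{01}+a_2(x)H^n_{02}$ patch consistently because they restrict to a common multiple of $I^n$ on the shared boundary; the coarse flow $I^m$ is decomposed into Y-networks with currents $a_0(x),a_1(x),a_2(x)$, and Cauchy--Schwarz plus $a_1^2+a_2^2\geq a_0^2/2$ yields $R_{n+m}\leq\frac43 R_nR_m$ (dually, via harmonic potentials $u,v,w$ and the orthogonality lemma $\eng^T_{n-1}(u,v-w)=0$, one gets $R^T_{m+n}\geq\frac34 R^T_mR^T_n$). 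Note also that only this one direction is proved or needed: it is a single inequality stated in two dual forms, Fekete's lemma applies to a one-sided super/subadditive sequence, and your claimed matching bound $R^T_{n+m}\leq C\,R^T_nR^T_m$ is never established in the paper; the paper explicitly concedes after Corollary \ref{cor-rho} that the limit could a priori be $0$ or $\infty$, and finiteness comes from the explicit estimates, not from a second multiplicative bound.

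For the numerical window you have the difficulty allocated backwards as well: you propose uniform one-step ratio bounds $2/3\leq R^T_{n+1}/R^T_n\leq 4/5$, which is stronger than what is needed, is not what the paper proves, and comes with no mechanism that would pin down those particular constants. The paper's Section \ref{sec-short} instead makes global comparisons with exactly solvable networks, valid at every scale at once: cutting edges of $G^H_n$ produces a graph $\widehat G_n$ that is a disjoint union of $2^n$ parallel paths of total length $6^n$, and Jensen's inequality applied to $\widehat R_n=\bigl(\sum_j l_{n,j}^{-1}\bigr)^{-1}$ gives $R_n\leq\widehat R_n\leq(3/2)^n$ by Proposition \ref{opengraphs}; shorting vertices of $G^H_n$ produces graph approximations of a non-p.c.f.\ Sierpinski gasket from \cite{Tep08} whose resistance scaling factor is known to be exactly $5/4$, giving $R_n\geq c\,(5/4)^n$. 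Once the limit exists, these two comparisons give $5/4\leq\rho\leq 3/2$ directly, with no increment analysis; your plan would need to supply the solvable comparison networks (parallel paths plus the known gasket) that your sketch currently leaves unspecified.
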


These estimates agree with the numerical experiments from \cite{BKN+12}, which suggest that there exists a limiting Dirichlet form on these fractals and estimates $\rho \approx 1.306$, and hence $\rho^T \approx 0.7655$. 

\begin{conjecture}
In the case $
5/4 \leq \rho \leq 3/2$ ($\rho \approx 1.306$), we conjecture that the recent results of A.~Grigor'yan, J.~Hu, K.-S.~Lau and M.~Yang in \cite{GH,GHL1,GHL2,GY2018} can imply existence of the Dirichlet form. 
\end{conjecture}

\begin{conjecture}
Since $2/3\leq \rho^T   \leq 4/5<5/4 \leq \rho \leq 3/2 $, we conjecture that there is essentially no uniqueness of the Dirichlet forms, spectral dimensions, resistance scaling factors etc   
for repeated barycentric subdivisions.
\end{conjecture}

This paper is organized as follows. Subsection~\ref{sec-appendix} defines general graph energy.  Subsection~\ref{sec-problem} lays out the   definitions of $G_n^H$ and $G_n^T$
and shows how to take advantage of the duality    to prove that $R_n = 1/R^T_n$. In Section~\ref{sec-upper} we   prove sub-multiplicative estimates $R_{m+n}\leq c R_mR_n$ for some constant $c$ independent of $m$ and $n$ in a fashion generalized from \cite{BB90}.  Then Fekete's theorem implies that the limits $\rho$ and $\rho^T$ exist.
 To show that these limits are finite, in
  Subsections~\ref{sec-short} and \ref{sec-short2}
 we prove upper and lower estimates on the constants $\rho$ and $\rho^T$
 by establishing upper and lower estimates on $R_n^T$ and $R_n$.
This is done by comparing $G_n^T$ and $G_n^H$ to subgraphs and quotient graphs respectively.


\section{Energy, resistance and duality}
\subsection{Energy, potentials, and flows on graphs} \label{sec-appendix}
This subsection collects the basic definitions and facts about  graph energies and resistances, which will be used in the current work. For  more detailed expositions on this subject see, for instance, \cite{BB90,DS84,Lyo14}. Let $G = (V,E)$ be a finite graph with vertex set $V$, and edge set $E$, which is a symmetric subset of $V\times V$. We define the set $\ell(V) = \set{f:V\to \R}$ to be the set of real valued functions on $V$, which we will sometimes refer to as functions or potentials on the graph $G$.
For all $p,q\in V$ we define conductances (weights) $c_{p,q} = c_{q,p}$  such that $c_{p,q} >0$ if $(p,q)\in E$ and $c_{p,q} = 0$ when $(p,q)\notin E$. Resistances between points $p$ and $q$ for any $(p,q)\in E$ will be defined $r_{p,q}:= 1/c_{p,q}$. For a graph with associated conductances, we define the graph energy
\[
\eng: \ell(V)\times \ell(V) \to \R,\quad\text{ \
} \quad \eng(f,g) = \frac12 \sum_{(p,q) \in E} c_{p,q}(f(p)-f(q))(g(p)-g(q)),
\]
and write $\eng(f):=\eng(f,f)$.

Antisymmetric   functions (with orientation) on the edge set will be denoted
\[
\ell^a(E) = \set{J:V\times V\to \R~|~J(p,q) = -J(q,p),\text{ and }J(p,q)=0\text{ if } (p,q)\notin E},
\]
and we define the energy dissipation $\operatorname{E}: \ell^a(E) \times \ell^a(E) \to \R$ to be the inner  product
\[
\operatorname{E}(J,K) = \frac12 \sum_{(p,q) \in E} r_{p,q} J(p,q)K(p,q)
\]
on $\ell^a(E)$. The discrete gradient $\nabla: \ell(V)\to\ell^a(E)$ is given by $$\nabla f (p,q) = c_{p,q}(f(p)-f(q)),$$ the discrete divergence $\div:\ell^a(E)\to\ell(V)$ is defined by $$\div J (p) = -\sum_{q \ : \ (p,q)\in E} J(p,q),$$ 
and the  associated   Laplace operator $\Delta: \ell(V)\to\ell(V)$ is defined by
\[
-\Delta f(p)  = -\div \nabla f(p) = \sum_{q:(p,q)\in E} c_{p,q}(f(q)-f(p)).
\]
  We follow the usual probabilistically and physically inspired convention where 
  $\Delta$ is a non-positive operator. This is analogous to the classical second derivative Laplace operator
  $\Delta=\frac {d^2}{dx^2}$ on $\mathbb R^1$, which is also
  non-positive, and sometimes is also denoted as ``$\nabla^2$''.
    In the more combinatorially and algebraically oriented literature
  $L=-\Delta$ is sometimes called the weighted graph Laplacian.

For $A,B\subset V$, $J\in \ell^a(E)$ is called a flow from $A$ to $B$, if $\div J (p) = 0$ for $p\notin A\cup B$. The flux of a flow $J$ from $A$ to $B$ is defined by
\[
\flux(A,B,J) = \sum_{p\in A}\div J(p) = - \sum_{p\in B} \div J(p).
\]
The effective resistance between sets $A$ and $B$ is defined by
\[
R(A,B)^{-1} = \inf\set{\eng(f)~|~f|_A \equiv 0,~f|_B\equiv 1} .
\]
Energy is minimized by the function $\phi$ such that $\phi|_A \equiv 0$, $\phi|_B\equiv 1$ and $\Delta \phi(p) = 0$ for $p\notin A\cup B$, and thus $\eng(\phi) = 1/R(A,B)$. We  refer to such a function $\phi$ as the harmonic function with boundary $A$ and $B$. The only function which satisfies $\Delta f \equiv 0$ (i.e. harmonic without boundary) and $f|_{A\cup B}\equiv 0$ is the constant $0$ function, thus the $\phi$ is unique. Similarly $I = R(A,B)^{-1} \nabla \phi$ is the unique energy minimizing flow from $A$ to $B$ with $\operatorname{E}(I) = 1$. The following four characterizations of $R(A,B)$ which are equivalent to the original, as seen in Section 2 of \cite{BB90}.
\begin{enumerate}
\item $R(A,B) = \sup\set{1/\eng(f)~:~f|_A \equiv0,~f|_B\equiv1}$
\item $R(A,B) = 1/\eng(\phi)$  where $\phi\in \ell(V)$ is the the unique function with $\phi|_A\equiv 0$,$\phi|_B \equiv 1$, $\Delta\phi(p) = 0$ for $p\notin A\cup B$.
\item $R(A,B) = \inf\set{\operatorname E(J)~:~\flux(A,B,J)=1}$.
\item $R(A,B) = \operatorname E(I)$ where $I\in\ell^a(E)$ is the unique energy dissipation minimizing unit flow from $A$ to $B$.
\end{enumerate}

\subsection{Barycentric subdivision, the hexacarpet and the resistance problem}\label{sec-problem}


We define the $2$-simplicial complexes $\T_n = (\E_n^0,\E^1_n,\E^2_n)$ where $\E_n^i$ are the $i$-simplexes of the complex,  starting with a $2$-simplex (a triangle) $\T_0$ with $0$-simplexes (vertexes) $\E_0^0 = \set{p_0,p_1,p_2}$, $1$-simplexes (edges) $\E_0^1 =\set{[p_i,p_j]}_{i\neq j}$, and $2$-simplex (triangle) $\E_0^2 = \set{[p_0,p_1,p_2]}$. Here, if $q_0,q_1,q_2\in\E_n^0$, then $[q_0,q_1]$ will refer to the $1$-simplex with $q_0$ and $q_1$ as endpoints (which may or may not be in $\E_n^1$), and similarly for $[q_0,q_1,q_2]$. We will only be considering simple simplicial complexes (without multiple edges/triangles).
Thus $[q_0,q_1]=[q_1,q_0]$ determines a unique 1-simplex for example. We  also use the notation $<$ to denote containment of simplexes, i.e. $q_0<[q_0,q_1]<[q_0,q_1,q_2]$.
$\T_{n+1}$ is defined inductively from $\T_n$ by barycentric subdivision, pictured in Figure~\ref{barycentricsubdivision1}. That is $\E_{n+1}^0$ is $\E_{n}^0$ along with the barycenters of simplexes in $\E_{n}^i$, $i=1,2$, which we  refer to as $\mathsf{b}(e)$ for $e\in \E_{n}^i$, $i=1,2$. $1$- and $2$-simplexes of $\T_{n+1}$ are formed from barycenters of nested simplexes. More concretely put, elements of $\E_{n+1}^1$ are either of the form $ [q_0, \mathsf{b}([q_0,q_1])]$ where $q_0,q_1\in \E_n^0$ with $[q_0,q_1]\in\E_n^1$, $[q_0,\mathsf{b}([q_0,q_1,q_2])]$, or $[\mathsf b([q_0,q_1]),\mathsf b([q_0,q_1,q_2])]$ where $q_0,q_1,q_2\in \E_n^0$ with  $[q_0,q_1,q_2]\in\E_n^2$, and elements of $\E_n^2$ are of the form $[q_0,\mathsf{b}([q_0,q_1]),\mathsf{b}([q_0,q_1,q_2])]$ where $q_0,q_1,q_2\in \E_n^0$ with $[q_0,q_1]\in\E_n^1$ and $[q_0,q_1,q_2]\in\E_n^2$.

\begin{defn}
We define the graph $G^T_n = (V_n^T,E_n^T)$ with vertex set $V_n^T = \E_n^0$ and edge relation $q\sim_T q'$ if $[q,q']\in \E_n^1$. We will refer to this as the 1-skeleton of the $\T_n$.
\end{defn}

\begin{defn}
Following \cite{BKN+12}, we define the graph $G_n = (V_n,E_n)$ with vertex set $V_n = \E_n^2$ and edge relation $[q_0,q_1,q]\sim[q_0,q_1,q']$. That is, the vertexes of $G_n$ are the $2$-simplexes of $\T_n$ and they are connected by an edge if these simplexes share a $1$-simplex.
\end{defn}

\begin{remark}
$G^T_n$ is classically known to be a planar graph, as seen in Figure~\ref{barycentricsubdivision1}, although throughout this work we will refer to the hexagonal embedding from Figure~\ref{boundaryproblem} more often. With either of these embeddings, $G_n$ is the weak planar dual, that is each of its vertexes correspond to a plane region carved out by the embedding of $G^T_n$ with the exception of the unbounded component.
\end{remark}

We will need explicit names for the elements of $\E_1^0 = \set{p_0,p_1,p_2,p'_0,p'_1,p_2', p'}$ where $\set{p_0,p_1,p_2} = \E^0_0$ as above, $p'_i = \mathsf{b}([p_i,p_{i'}])$, $i'\equiv i+1\mod 3$, and $p' = \mathsf{b}([p_0,p_1,p_2])$.

This is convenient for recursively defining functions on $\T_n$. Define self-similarity maps $F_i:\T_n\to\T_{n+1}$ for $i=0,1,\ldots,5$, which are defined on $\T_0$ by $F_i(p_0) = p'$, $F_i(p_1) = p_{\lceil i/2\rceil }$, and $F_i(p_2) = p_{ \lfloor i/2\rfloor }'$, where the index is taken mod 3. $F_i$ is extended to $\T_n$ by the relations  $F_i([q_0,q_1]) = [F_i(q_0),F_i(q_1)]$, $F_i([q_0,q_1,q_2]) = [F_i(q_0),F_i(q_1),F_i(q_2)]$ and $\mathsf b \circ F_i = F_i \circ \mathsf b$. If $w = w_1w_2\cdots w_k$ is a word in $\set{0,1,\ldots,5}^k$, then we define $F_w: \T_n^{T} \to \T_{n+k}^{T}$ by $F_w := F_{w_1}\circ F_{w_2}\circ\cdots\circ F_{w_k}$.
$F_i$ as a function from $G^T_n \to G^T_{n+1}$ or $G_n\to G_{n+1}$ is a  graph homomorphism, and $G^{T}_{n+1} = \cup_{i=0}^5 F_i(G_n^{T})$. However, this is not true for $G_n$ and $G_{n+1}$, since not every edge of $G_{n+1}$ is covered by $F_i(G_n)$ for some $i=0,1,\ldots,5$. We want to take advantage of this self-similarity throughout the current work, thus we  define a modified hexacarpet graph.

\begin{defn}
The (modified) hexacarpet graph $G^H_n$ is defined to have vertex set
\[
V_n^H :=\E_n^2 \cup \E_n^1
\]
where adjacency is determined by $[q_0,q_1,q]\sim_H[q_0,q_1]$, i.e., $e\in \E_n^1$ is connected to $f\in \E_n^2$ if $e <f$.
\end{defn}

For $G^T_n$, define the conductance of edges
\[
c^T_{q,q'}=\begin{cases}
0 & \text{if }[q,q']\notin\E_n^1 \\
1 & \text{if there exists } q_0 \neq q_0' \text{ with } [q,q',q_0],[q,q',q_0'] \in \E_n^2 \\
2 & \text{if there exist only one }q_0\text{ such that }[q,q',q_0]\in\E_n^2.
\end{cases}
\]
We take $
\eng_n^T
$
to be the graph energy defined with the above conductance. The advantage of  these conductance values is the resulting self-similarity relation
\[
\eng_{n+1}^T (f) = \sum_{i=0}^5 \eng_n^T (f\circ F_i).
\]
Similarly if we define $c_{q,q'}^H = 1/2$ if $q\sim_H q'$ and $0$ otherwise, then the resulting energy function $\eng^H_n$ satisfies the following relation
\[
\eng_{n+1}^H (f) = \sum_{i=0}^5 \eng_n^H (f\circ F_i).
\]
Both of these relations are also true for energy dissipation of functions on edges of these graphs.
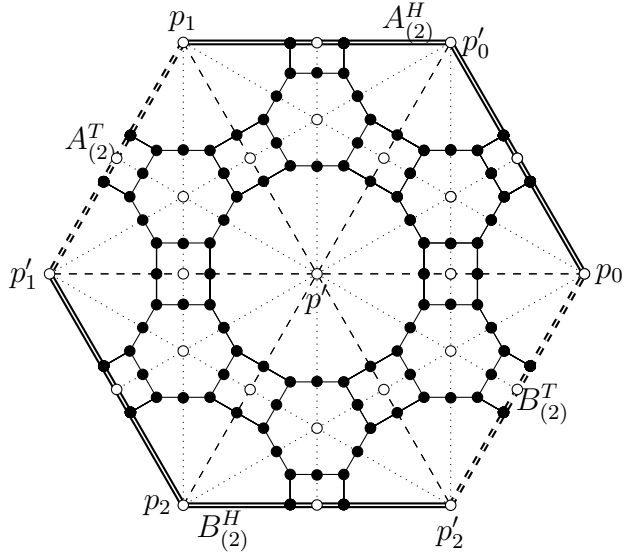
\begin{figure}
\inviz{
\begin{tikzpicture}[scale=.8888]

\foreach \a in {0,1,2,...,5}{
\coordinate (\a) at ($(60*\a:4)$);

\draw[dashed] ($(60*\a:4)$)--($(60+60*\a:4)$);

\draw[dashed] ($(120*\a:4)$)--($(-180+120*\a:4)$);

\draw[dotted] (0:0)--($.5*(60*\a:4)+.5*(-60+60*\a:4)$);
\draw[dotted]($(60*\a:4)$)--($(-60+60*\a:2)$);
\draw[dotted] ($(-120+60*\a:2)$)--($(180+60*\a:4)$);
}

\draw[thick,double] (-180:4)node[left]{$p_1'$}--(-120:4) node[left] {$p_2$}--(-60:4) node[below] {$p_2'$};
\draw[thick,double,dashed] (-60:4)--(0:4);
\draw[thick,double] (0:4)node[right]{$p_0$}--(60:4)node[right] {$p_0'$}--(120:4)node[above] {$p_1$};
\draw[thick,double,dashed] (120:4)--(180:4);
\node at (150:3.90) {$A^T_{(2)}$};
\node at (-30:3.90) {$B^T_{(2)}$};
\node at (70:4) {$A^H_{(2)}$};
\node at (250:4.1) {$B^H_{(2)}$};

\foreach \a in {0,1,...,6}{
\foreach \b in {0,1,2,...,6}{

\draw ($(30+60*\a:2.31)+(60*\b:.8)$)--($(30+60*\a:2.31)+(60+60*\b:.8)$);
\draw ($(30+60*\a:2.31)+(180+60*\a:.8)$)--($(90+60*\a:2.31)+(-60+60*\a:.8)$);
\draw ($(30+60*\a:2.31)+(120+60*\a:.8)$)--($(90+60*\a:2.31)+(60*\a:.8)$);
\draw ($(30+60*\a:2.31)+(60*\a:.8)$)--($(30+60*\a:2.31)+(60*\a:.8)+(30+60*\a:.45)$);
\draw ($(30+60*\a:2.31)+(60+60*\a:.8)$)--($(30+60*\a:2.31)+(60+60*\a:.8)+(30+60*\a:.45)$);
}}

\foreach \a in {0,1,...,6}{
\foreach \b in {0,1,2,...,6}{
\draw[fill=black] ($(30+60*\a:2.31)+(60*\b:.8)$)                     circle (2.2222pt);
\draw[fill=black] ($(30+60*\a:2.31)+(60*\a:.8)+(30+60*\a:.45)$)      circle (2.2222pt);
\draw[fill=black] ($(30+60*\a:2.31)+(60+60*\a:.8)+(30+60*\a:.45)$)   circle (2.2222pt);
\draw[fill=black] ($(30+60*\a:2.31)+(30+60*\b:.7)$)                     circle (2.2222pt);
}

\draw[fill=black] ($(60*\a:2.4)$) circle (2.2222pt);
\draw[fill=black] ($(60*\a:1.6)$) circle (2.2222pt);
\draw[fill=white] ($(60*\a:4)$) circle (2.2222pt);
\draw[fill=white] ($(60*\a:2)$)circle (2.2222pt);
\draw[fill=white] ($(30+60*\a:3.46)$)circle (2.2222pt);
\draw[fill=white] ($(30+60*\a:2.31)$)circle (2.2222pt);
}
\draw[fill=white] (0,0) circle (2.2222pt) node[below] {$p'$};

\end{tikzpicture}
}
\caption{$A_{(2)}^{T/H}$ and $B_{(2)}^{T/H}$ on the hexagonal embedding of $G_2^{T/H}$.}\label{boundaryproblem}
\end{figure}

It will often be useful to think of these graphs as embedded in the plane $\R^2$. It is typical to think of $\T_n$ as a subdivided triangle in the plane, but we  embed it as a hexagon, as $\T_n$, $n>0$, has symmetry group $D_6$, the dihedral group on $6$ elements. As such, for $n> 0$ we define a map $F^T: \E_n^0 = V_n^T \to \R^2$ by $F^T(p') = (0,0)$,
\[
F^T(p_k) = (\cos(2k\pi/3), \sin(2k\pi/3)), \quad  F^T(p_k') = (\cos((2k+1)\pi/6),\sin((2k+1)\pi/6))
\]
for $k=0,1,2$. Thus $\E_1^0$ is mapped to the corners and midpoint of a regular hexagon centered at $(0,0)$, see Figure~\ref{boundaryproblem}. We extend to $\E_n^0$ by taking averages:
\[
F^T\circ\mathsf b([q_0,q_1]) = \frac{F^T(q_0){+}F^T(q_1)}{2},\quad   F^T\circ\mathsf b([q_0,q_1,q_2]) = \frac{F^T(q_0){+}F^T(q_1){+}F^T(q_2)}{3}.
\]
We embed $G^H_n$ by the map $F^H:V_n^H \to \R^2$ by $F^H = F^T\circ\mathsf b$. Thus the vertexes of the embedded $G^H_n$ are the centers of the triangles and edges of the embedding of $G^T_n$.

If $e = [q_0,q_1] \in \E_n^1$ then we define the geometric realization of $e$ , $|e|$, to be the convex hull of $F^T(p_0)$ and $F^T(p_1)$. That is
\[
|e| = \set{\theta_0 F^T(p_0) + \theta_1F^T(p_1)~:~ \theta_0+\theta_1 = 1}.
\]
Similarly, if $f=[q_0,q_1,q_2] \in \E_n^2$, then $|f|$ is defined to be the convex hull of $F^T(p_0)$, $F^T(p_1)$ and $F^T(p_2)$.

To define the resistance problem on these graphs, we   need to define the boundary of these graphs.   $|e| \subset |f|$, for $e\in E_k^i$ and $f\in\E_n^i$
if the geometric realization of $e$ is a subset of the geometric realization of $f$, e.g. $|\mathsf b(e)|\subset |e|$ or $|[q_0,\mathsf b ([q_0,q_1])]| \subset |[q_0,q_1,q_2]|$.
We define $L^{(n,T)}_i$, resp. $L^{(n,H)}_i$, $i=0,1,\ldots 5$ to be the vertexes  $q\in V_n^T$, resp. $V_n^H$, such that $|q|\subset |[p_{ i /2},p_{i/2}']|$  if $i$ is even, and the set of $|q|\subset |[p_{ j},p_{(i-1)/2}']|$ where $ j \equiv (i+1)/2 \mod 3$  if $i$ is odd. We will suppress arguments of the superscript when there is no danger of confusion.

\begin{defn}
Define $A_{(n)}^H = L_0^{(n,H)}\cup L_1^{(n,H)}$ and $B_{(n)}^H = L_3^{(n,H)}\cup L_4^{(n,H)}$. Further, define $R_n$ to be the effective resistance with respect to $\eng^H_n$ between $A_{(n)}^H$ and $B_{(n)}^H$.
\end{defn}

\begin{defn}
Define $A^T_{(n)} = L_2^{(n,T)}$ and $B^T_{(n)}= L_5^{(n,T)}$. Further, define $R_n^T$ to be the effective resistance with respect to $\eng^T_n$ between $A_{(n)}^T$ and $B_{(n)}^T$.
\end{defn}

\begin{theorem}
The resistances $R_n$ and $R_n^T$ are related by
$
R^T_n = 1/{R_n}.
$
\end{theorem}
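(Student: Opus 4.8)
The statement asserts an exact electrical reciprocity, so the natural route is discrete planar (Hodge) duality between the two embedded graphs. The plan is to exploit the fact, recorded in the Remark, that the embedded $G_n^H$ is the subdivided weak planar dual of $G_n^T$: the face-vertices $\E_n^2$ of $G_n^H$ sit at the barycenters of the bounded faces of $G_n^T$, the edge-vertices $\E_n^1$ sit on the edges of $G_n^T$ and split each dual edge into two halves, and the boundary $1$-simplices play the role of vertices attached to the unbounded face. I would first make this incidence dictionary precise and check that under it the primal conductances $c^T\in\{1,2\}$ are carried to the reciprocal dual weights realized by $c^H=1/2$, so that the quadratic forms $\eng_n^T$ and $\eng_n^H$ are genuinely dual and not merely dual up to a constant.

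The heart of the argument is the flow--potential correspondence. I would take the energy minimizer $u$ for $\eng_n^T$ with $u\equiv 0$ on $A_{(n)}^T=L_2$ and $u\equiv 1$ on $B_{(n)}^T=L_5$; its current $i_e=c^T_e\,\nabla u_e$ is then a divergence-free flow of total strength equal to the effective conductance $1/R_n^T$. Since the hexagon is simply connected away from the terminals, rotating this flow by a quarter turn yields the gradient of a single-valued stream function on the dual vertices, i.e.\ a potential on $G_n^H$; Ohm's law with the reciprocal weights turns $\sum_e (1/c^T_e)\, i_e^2=\eng_n^T(u)$ into the $\eng_n^H$-energy of that potential. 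Running the construction in the opposite direction, starting from Thomson's unit-flow characterization of $R_n$, should show the map is an energy-preserving bijection between admissible potentials on $G_n^T$ and admissible unit flows on $G_n^H$, whence $1/R_n^T=\eng_n^T(u)=R_n$.

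The remaining structural point is the bookkeeping at the boundary. I would argue that shorting the primal terminal arcs $L_2$ and $L_5$ corresponds exactly to splitting the outer face of $G_n^T$ at those two arcs, so that the two complementary boundary arcs become the dual terminals; these are precisely $A_{(n)}^H=L_0\cup L_1$ and $B_{(n)}^H=L_3\cup L_4$, while the boundary $1$-simplices lying over $L_2$ and $L_5$ become free (non-terminal) leaves carrying no current. The stream function then takes a constant value on each dual terminal, and the jump between the two values equals the total primal current $1/R_n^T$; this is exactly what converts effective conductance on one graph into effective resistance on the other.

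The main obstacle I anticipate is not the duality heuristic but verifying that the chosen weights make the reciprocity hold \emph{on the nose} for every $n$: one must confirm that the series pair of $c^H$-edges through each interior $\E_n^1$-vertex, together with the single $c^H$-edge at each boundary $\E_n^1$-vertex, reproduces the reciprocal-dual weights $1/c^T_e$ after the outer-face splitting, with no spurious constant surviving. The cleanest way to discharge this is to verify the identity directly on the base graph $G_1^{T/H}$, where the $D_6$-symmetry forces the potential to equal $1/2$ at the center $p'$ and along the reflection axis and reduces the computation to a one-parameter problem, and then to propagate it to all $n$ using the self-similar energy relations $\eng_{n+1}^{T/H}(f)=\sum_{i=0}^5 \eng_n^{T/H}(f\circ F_i)$, checking that the duality map intertwines the six contractions $F_i$. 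I expect the delicate part to be precisely this constant-tracking through the subdivision and the boundary modification, rather than the topological core of the duality.
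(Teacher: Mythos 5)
Your proposal is correct and takes essentially the same route as the paper: the paper shorts $A^T_{(n)},B^T_{(n)}$ and $A^H_{(n)},B^H_{(n)}$ to single terminals $a^T,b^T,a^H,b^H$, series-reduces the two $G^H$-edges through each interior edge-vertex, discards the degree-one vertices over $L_2,L_5$, adjoins an extra edge joining the terminals so the two networks become exact planar duals, and then cites Proposition 9.4 of \cite{Lyo14} --- whose proof is precisely your flow-to-stream-function correspondence, so you have merely inlined the cited proposition. Your insistence on checking that the reciprocal dual weights come out on the nose (series pair of half-edges at interior $\E_n^1$-vertices, single half-edge at boundary ones) is exactly the bookkeeping that makes that citation apply, and is handled the same way in the paper's construction of $\widetilde G^H_n$.
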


\begin{figure}
\inviz{
\begin{tikzpicture}[scale=.8888]

\foreach \a in {0,1,2,...,5}{
\coordinate (\a) at ($(60*\a:4)$);

\draw[dashed] ($(60*\a:4)$)--($(60+60*\a:4)$);

\draw[dashed] ($(120*\a:4)$)--($(-180+120*\a:4)$);

\draw[dashed] (0:0)--($.5*(60*\a:4)+.5*(-60+60*\a:4)$);
\draw[dashed]($(60*\a:4)$)--($(-60+60*\a:2)$);
\draw[dashed] ($(-120+60*\a:2)$)--($(180+60*\a:4)$);
}

\draw[fill=white] (0,0) circle (2.2222pt);
\draw[thick,double] (-37:3.8)--(0:4.35) node[right] {$b^H$}--(37:3.8);
\draw[thick,double] (143:3.8)--(180:4.35) node[left] {$a^H$}--(217:3.8);
\draw[thick] (60:4.1)--node[above] {$a^T$}(120:4.1) ;
\draw[thick] (60:3.9)--(120:3.9) ;
\draw[thick] (-60:4.1)--node[below] {$b^T$}(-120:4.1);
\draw[thick] (-60:3.9)--(-120:3.9) ;

\foreach \a in {0,1,...,6}{
\foreach \b in {0,1,2,...,6}{
\draw ($(30+60*\a:2.31)+(60*\b:.8)$)--($(30+60*\a:2.31)+(60+60*\b:.8)$);
\draw ($(30+60*\a:2.31)+(180+60*\a:.8)$)--($(90+60*\a:2.31)+(-60+60*\a:.8)$);
\draw ($(30+60*\a:2.31)+(120+60*\a:.8)$)--($(90+60*\a:2.31)+(60*\a:.8)$);
\draw[fill=black] ($(30+60*\a:2.31)+(60*\b:.8)$)                     circle (2.2222pt);
\draw[fill=white] ($(30+60*\a:2.31)$) circle (2.2222pt);
\draw[fill=white] ($(60*\a:2)$) circle (2.2222pt);
\draw[fill=white] ($(30+60*\a:3.47)$) circle (2.2222pt);
\draw[fill=white] ($(60*\a:4)$) circle (2.2222pt);
}}
\foreach \a in {0,2,3,5,6}{
\draw[fill=black] ($(30+60*\a:2.31)+(60*\a:.8)+(30+60*\a:.75)$)      circle (2.2222pt);
\draw[fill=black] ($(30+60*\a:2.31)+(60+60*\a:.8)+(30+60*\a:.75)$)   circle (2.2222pt);
\draw ($(30+60*\a:2.31)+(60*\a:.8)$)--($(30+60*\a:2.31)+(60*\a:.8)+(30+60*\a:.75)$);
\draw ($(30+60*\a:2.31)+(60+60*\a:.8)$)--($(30+60*\a:2.31)+(60+60*\a:.8)+(30+60*\a:.75)$);

}

\end{tikzpicture}
}
\caption{$\tilde G^H_2$ and $\tilde G_2^T$ without the additional edges. 
}\label{proofoduality}
\end{figure}
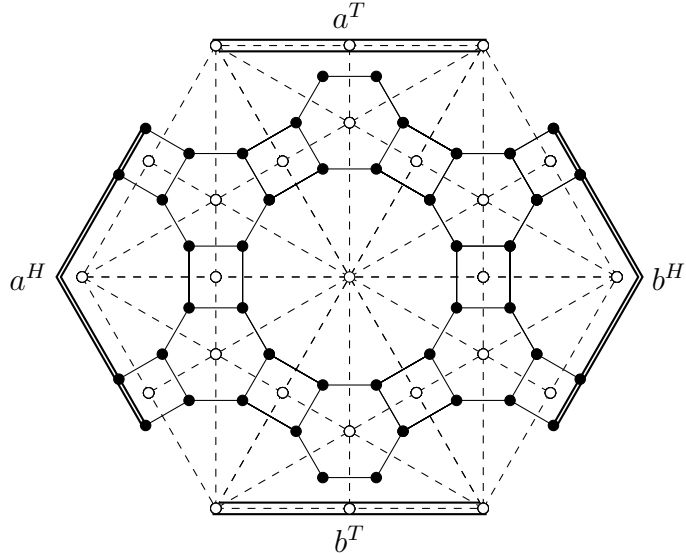

\begin{proof}
The main tool of this proof is proposition 9.4 from \cite{Lyo14}. 
Define the weighted graph $\widetilde{ G}^T_n = (\widetilde{V}^T_n, \widetilde{E}_n^T)$ 
where $\widetilde{V}^T_n$ is $V^T_n$ modulo the relation which identifies all elements of $A^T_{(n)}$ and $B^T_{(n)}$ into two vertexes called $a^T$ and $b^T$ respectively. The 
effective resistance between $a^T$ and $b^T$ with respect to $\tilde\eng^T_n$ is $R^T_n$.

Further define $\widetilde G^H_n$ by, not only identifying $A^H_{(n)}$ and $B^H_{(n)}$ into single vertexes $a^H$ and $b^H$ respectively, but also to replace the sequential edges of the form $[q_0,q_1,q]\sim_H[q_0,q_1]\sim_H[q_0,q_1,q']$ using Kirchoff's laws. Thus the sequential connections with resistance $1/2$  are replaced with one connection $[q_0,q_1,q]\sim_H[q_0,q_1,q']$ with resistance $1$. It is easy to see that the resistance between $a^H$ and $b^H$ is $R_n$. Also, remove the vertexes contained in $A^T_{(n)}$ and $B^T_{(n)}$ and associated edges --- since these vertexes are connected to the graph by only 1 edge,removing them has no impact on the resistance.

If we define the graphs $(\tilde G^H_n)^\dagger$ and $(\tilde G^T_n)^\dagger$ to be the $\tilde G^H_n$ and $\tilde G^T_n$ with an additional edge connecting $a^H$ to $b^H$ and $a^T$ to $b^T$ respectively, then $(\tilde G^H_n)^\dagger$ is the planar dual of $(\tilde G^T_n)^\dagger$  (see Figure~\ref{proofoduality}), and thus by Proposition 9.4 in \cite{Lyo14}, $R_n = 1/R^T_n$.
\end{proof}

\section{Multiplicative estimates and other proofs}\label{sec-upper}


\begin{theorem}\label{thm-multiplicative}
$R_{m+n} \leq \frac43 R_n R_m$, and equivalently $R^T_{m+n}\geq \frac34 R^T_mR^T_n$.
\end{theorem}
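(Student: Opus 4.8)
The plan is to prove the first inequality $R_{m+n}\le \tfrac43 R_m R_n$ directly for the hexacarpet graphs; the equivalent statement for $R^T$ is then immediate from the duality identity $R_n^T = 1/R_n$ established in Section~\ref{sec-problem}, since $R^T_{m+n} = 1/R_{m+n}\ge \tfrac{3}{4R_mR_n} = \tfrac34 R^T_m R^T_n$. To bound $R_{m+n}$ from above I would invoke Thomson's principle (Section~\ref{sec-appendix}): it suffices to exhibit a \emph{single} unit flow on $G^H_{m+n}$ between $A^H_{(m+n)}$ and $B^H_{(m+n)}$ whose energy dissipation is at most $\tfrac43 R_m R_n$. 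Everything rests on the self-similar decomposition $G^H_{m+n} = \bigcup_{|w|=m} F_w(G^H_n)$ into $6^m$ cells, indexed by the words $w\in\{0,\dots,5\}^m$ (equivalently by the $2$-simplexes of $\T_m$), together with the energy-dissipation identity $\eng^H_{m+n}(\theta) = \sum_{|w|=m}\eng^H_n(\theta\circ F_w)$, which is the depth-$m$ iterate of the one-step relation recorded above.

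The test flow is built in two layers, in the fashion of \cite{BB90}. First I would take the minimizing unit flow $\phi$ on the coarse graph $G^H_m$ from $A^H_{(m)}$ to $B^H_{(m)}$, whose energy is exactly $R_m$; for each pair of cells sharing an interface, $\phi$ prescribes the net current that must cross it. Second, inside each cell $F_w(G^H_n)$ I would realize these prescribed face fluxes as a superposition of fixed reference crossing flows between pairs of the cell's faces, scaled by the required currents. The decisive advantage of working with flows rather than potentials is that adjacent cells glue automatically: since the reference flows are canonical and compatible under the reflection symmetry that identifies a shared interface, the fine currents agree edge-by-edge, so the union is a genuine unit flow on $G^H_{m+n}$ with no harmonic-matching condition to enforce.

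The energy is then accounted cell by cell via the self-similarity identity: each cell contributes $\eng^H_n$ of its fine flow, which by the scaling relation is a quadratic form in the net fluxes on the three faces of that cell. By the $D_3$ (indeed $D_6$) symmetry of $\T_n$ this three-terminal response is symmetric, hence on the zero-sum subspace it is a symmetric star form $s\,(I_1^2+I_2^2+I_3^2)$; summing over cells and using that each interface is shared by two cells collapses the total to $r\sum_e \phi(e)^2 = r\,R_m$ for the associated pairwise cell resistance $r$. The main obstacle, and the source of the constant, is precisely this per-cell estimate: the current supplied by $\phi$ need not enter and leave a cell through the single opposite pair of faces that defines $R_n$, but may split among all three faces, and the reference flows must be matched across interfaces rather than optimized independently. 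One therefore has to control the full symmetric three-terminal network of a single cell and bound $r$ by a universal multiple of the two-terminal resistance $R_n$; carrying out this explicit star computation is what pins the constant at $r=\tfrac43 R_n$, giving total energy $\le \tfrac43 R_n R_m$ (equivalently $\tfrac34$ on the $R^T$ side). Finally I would verify that the lifted flow respects the boundary sets --- that the cells meeting $A^H_{(m+n)}$ and $B^H_{(m+n)}$ are exactly the images under the maps $F_w$ of the cells meeting $A^H_{(m)}$ and $B^H_{(m)}$ --- which follows from the nested, cylindrical description of the sets $L_i^{(n,H)}$.
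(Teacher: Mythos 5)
Your proposal follows essentially the same route as the paper's first (flow-based) proof: lift the coarse minimizing unit flow on $G^H_m$, viewed as Y-networks with face currents $a_0(x),a_1(x),a_2(x)$, by superposing in each cell $F_w(G^H_n)$ the reflection-compatible reference crossing flows $H^n_{01},H^n_{02}$ built from the fine minimizer $I^n$ (which glue across interfaces precisely because of the boundary symmetry you invoke), and extract the constant from the three-face splitting, namely $a_0^2\leq\frac{2}{3}\left(a_0^2+a_1^2+a_2^2\right)$. The one imprecision is that the per-cell energy is not exactly a diagonal star form $s\,(I_1^2+I_2^2+I_3^2)$ --- the superposition $a_1H^n_{01}+a_2H^n_{02}$ carries a cross term, which the paper controls by Cauchy--Schwarz together with the orientation convention forcing $a_1(x)a_2(x)\geq 0$ --- but this is exactly the ``explicit star computation'' your sketch defers to, and it yields the same constant $\frac43$, so the two arguments coincide in substance.
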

We give two proofs of this theorem.
The constants differ in the proofs, but the exact value of the constant does not affect the existence of $\rho$ and $\rho^T$. This establishes the result independently of the duality of the graphs
(variations on these proofs work for different choices of boundary).
One version proves the upper estimate on $R_{m+n}$ directly, and uses flows on $G_n^H$. The direct proof of the lower estimate on $R^T_{m+n}$ is proven using potentials on $G^T_n$. The two proofs mirror the upper and lower bounds for the resistance of the pre-carpet approximations for the Sierpinski carpet in \cite{BB90}. The two versions of our proof highlight the importance of duality in proving Barlow--Bass style resistance estimates, and suggest possible
generalizations.

Theorem~\ref{thm-multiplicative} implies that there are constants  $c_0$ and $c_1$ such that $c_0 +\log R_n^T$ and $c_1+\log R_n$ are superadditive/subadditive positive sequences
and
thus
we have the following.
\begin{cor}\label{cor-rho}
The limits
$
\lim\limits_{n\to\infty}\frac1n\log R_n^T = \log\rho^T
$
 and
$
\lim\limits_{n\to\infty}\frac1n\log R_n = \log\rho
$
exist.
\end{cor}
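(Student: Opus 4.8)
The plan is to apply Fekete's subadditive lemma to the sequence $\log R_n$ after first arranging that it is genuinely subadditive. From Theorem~\ref{thm-multiplicative} I have the submultiplicative bound $R_{m+n}\leq \tfrac43 R_m R_n$. Taking logarithms gives
\[
\log R_{m+n} \leq \log R_m + \log R_n + \log\tfrac43.
\]
The constant $\log\tfrac43$ prevents $\log R_n$ from being subadditive on the nose, so I would absorb it by setting $a_n := \log R_n + \log\tfrac43$. Then a short computation shows
\[
a_{m+n} = \log R_{m+n} + \log\tfrac43 \leq \log R_m + \log R_n + 2\log\tfrac43 = a_m + a_n,
\]
so $(a_n)$ is a genuinely subadditive sequence of real numbers. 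By Fekete's lemma the limit $\lim_{n\to\infty} a_n/n$ exists in $[-\infty,\infty)$ and equals $\inf_n a_n/n$. Since $a_n$ and $\log R_n$ differ by the fixed constant $\log\tfrac43$, we get $\lim_{n\to\infty}\tfrac1n\log R_n = \lim_{n\to\infty} a_n/n$, and I would define this common value to be $\log\rho$.

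For the transpose sequence I would argue identically using the equivalent lower bound $R^T_{m+n}\geq \tfrac34 R^T_m R^T_n$ from Theorem~\ref{thm-multiplicative}: setting $b_n := \log R^T_n - \log\tfrac43$ makes $(b_n)$ superadditive, so Fekete's lemma (in its superadditive form, giving $\lim b_n/n = \sup_n b_n/n$) yields existence of $\lim_{n\to\infty}\tfrac1n\log R^T_n =: \log\rho^T$. Alternatively, and more cleanly, I would simply invoke the duality $R^T_n = 1/R_n$ established in the previous theorem: then $\tfrac1n\log R^T_n = -\tfrac1n\log R_n$, so existence of the one limit is immediate from existence of the other, and this also records the relation $\log\rho^T = -\log\rho$, i.e.\ $\rho^T = 1/\rho$.

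The only genuine subtlety is that Fekete's lemma guarantees the limit exists in $[-\infty,\infty)$ but not that it is finite; a priori $\log\rho$ could be $-\infty$ (and $\log\rho^T = +\infty$). The corollary as stated only asserts existence, so this is not strictly an obstacle for the corollary itself, but I should flag that finiteness — equivalently the two-sided bounds $\tfrac54\leq\rho\leq\tfrac32$ and $\tfrac23\leq\rho^T\leq\tfrac45$ of the main theorem — is exactly what is deferred to the short-circuit/cutting comparison arguments of Section~\ref{sec-short}. Thus the main work here is purely the bookkeeping of absorbing the constant $\tfrac43$ into a shifted sequence so that the hypothesis of Fekete's lemma is met; the existence of the limits then follows immediately, with finiteness postponed. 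I would write this out in a couple of lines, citing Fekete and pointing forward to Section~\ref{sec-short} for the quantitative bounds.
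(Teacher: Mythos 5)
Your proposal is correct and matches the paper's own argument: the paper likewise absorbs the constant from Theorem~\ref{thm-multiplicative} into shifted sequences ($c_0+\log R_n^T$ superadditive, $c_1+\log R_n$ subadditive), invokes Fekete's lemma to get existence of the limits, and explicitly defers finiteness (ruling out $\rho=0$ or $\infty$) to the open/short-circuit estimates of Section~\ref{sec-short}, exactly as you flag. Your explicit verification of subadditivity for $a_n=\log R_n+\log\tfrac43$ and the alternative via the duality $R_n^T=1/R_n$ are both consistent with the paper's treatment.
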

Note that this corollary does not rule out the possibility that $\rho = 0$ or $\infty$. In Subsections~\ref{sec-short} and \ref{sec-short2}  we establish  positive upper and lower estimates on $\rho$ and $\rho^T$.

\subsection{Upper estimate and flows on $G_n^H$}

\begin{figure}
\includegraphics[width=.7777\textwidth]{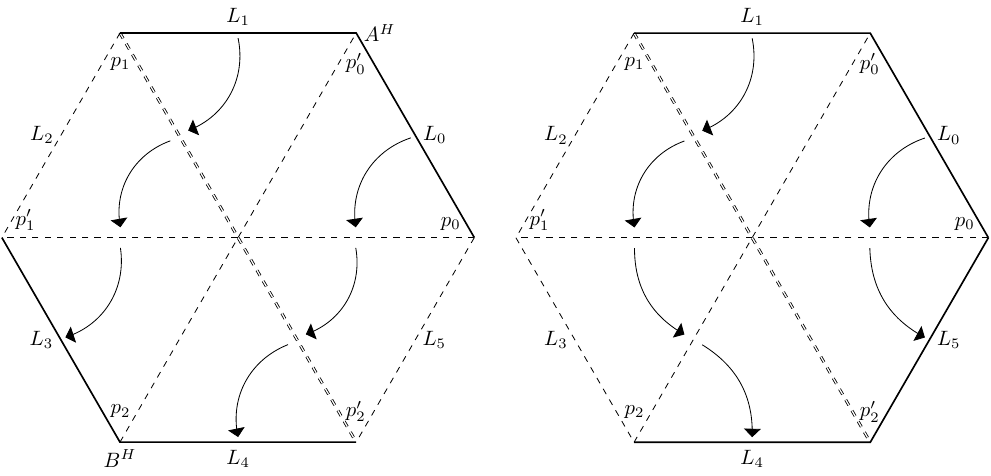}
\caption{\label{fig:flowH} The transformation from the flow $I^n$ (left) to the flow $H_{02}^n$ (right).}
\end{figure}

Consider the flow $I^n$ which is the minimizing flow on $G_n^H$ from $A^H_{(n)}$ to $B^H_{(n)}$. 
We will construct a flow on $G_{n+m}$ which has global structure resembling $I^m$ but on $F_\omega G_n^H$ for $\omega\in \set{0,1,\ldots,5}^m$ has structure which is built from $I^n$.

Define $H_{02}^n = I^n$ when restricted to the half of the $G_n^H$ which connects vertexes contained in $[p_0,p'_0,p']$, $[p_0',p_1,p']$, $[p_1,p'_1,p']$. On the other half, $H_{02}^n$ is $I^n$ composed with the symmetry which exchanges $p_0$ with $p_1'$, $p_0'$ with $p_1$, and $p_2$ with $p'_2$ and is extended to the rest of $V^H_{n}$ by convex combinations. The construction of $H_{02}^n$ is depicted in Figure~\ref{fig:flowH}. Using the embedding $F^H$, this symmetry is the reflection through the line which makes a $\pi/2$ angle with the $x$-axis. Because $I^n$ subjected to a $\pi$-rotation is $-I^n$, $H_{02}^n$ is a flow.

Thus $H_{02}^n$ is a flow on $G_n^H$ between $L_0^{(n)}\cup L^{(n)}_1$ and $L_4^{(n)}\cup L_5^{(n)}$, with flux $1$. 
 $H_{01}^n$ is the flow from $L_0^{(n)}\cup L^{(n)}_1$ and $L_2^{(n)}\cup L_3^{(n)}$ with flux $1$ obtained from $H_{02}^n$ by applying the appropriate symmetry. Since the values of $H^n_{01}$ (resp. $H_{02}^n$) are just a rearrangement of those in $I^n$, then $ E(H^n_{01})= E(H^n_{02})=E(I^n)=R_n$ for all $i\neq j$.

We   consider $G_m^H$ as the set of Y-networks, centered at $x\in \E_m^2 \subset V_m^H$. Take $a_0(x)$, $a_1(x)$, $a_2(x)$ to refer to the outward flow of $I^m$ restricted to each of these edges in the Y-network associated to $x$ with orientation such that $a_0(x)$ is of a different sign then $a_1(x)$ and $a_2(x)$. i.e. if $a_0(x)<0$, then $a_1(x),a_2(x)\geq 0$, if $a_0(x)>0$, $a_1(x),a_2(x)\leq 0$, and in the case when $a_0(x)=a_1(x)=a_2(x) = 0$ then the choice is arbitrary
and
\[
E(I^m) = \frac12 \sum_{x \in \E_m^2} (a_0(x)^2+a_1(x)^2+a_2(x)^2) = R_m.
\]

Take $F_\omega G_n^H\subset G_{n+m}^H$ such that $F_\omega ([p_0,p_1,p_2]) = x$  to be the subgraph of $G_{n+m}^H$ isomorphic to $G_n^H$ corresponding to $x\in \E_m^2$. We  assume that the labeling of the sides $F_\omega(L_2^n\cup L_3^n)$ is contained in the edge which corresponds to the values $a_1(x)$, and $F_\omega(L_4^n\cup L^n_5)$ corresponds to $a_2(x)$. The set of subgraphs $\set{F_\omega G_n^H}_{x\in \E_n^2,\omega\in \set{0,1,\ldots, 5}^m}$ cover $G_{m+n}^H$ and define the flow $J$ on $G_{m+n}^H$by its values on these subgraphs
\[
J\circ F_\omega = a_1(x)H^n_{01} + a_2(x)H^n_{02}.
\]
$J$ is a well defined flow because $H_{02}^n$ is obtained by a reflection of $H_{01}^n$ which is symmetric with respect to $I^n$, so $ aH_{01}^n(y,z) + bH_{02}^n(y,z) = (a+b)I^n(y,z)$ for all $y\in L_0^n\cup L_1^n$.

Now we see that
\begin{align*}
\operatorname E(J)& = \sum_{x\in \E^2_m} \operatorname E(a_1(x)H_{01}^n+a_2(x)H^n_{01})\\
 & = \sum_{x\in \E^2_m} \paren{a_1(x)^2  \operatorname E(H_{01}^n) + a_2(x)^2 \operatorname E(H_{02}^n) + 2a_1(x)a_2(x) \operatorname E(H_{01}^n,H_{02}^n)} \\
 & \leq \sum_{x\in \E^2_m} \paren{ a_1(x)^2\operatorname E(H_{01}^n) + a_2(x)^2\operatorname E(H_{02}^n) + 2a_1(x)a_2(x)\operatorname E(H_{01}^n)^{1/2}\operatorname E(H_{02}^n)^{1/2} }  \\
 & = \sum_{x\in \E^2_m} (a_1(x)+a_2(x))^2\operatorname E(I_n) = R_n\sum_{x\in \E^2_m} a_0(x)^2 \\
 & \leq R_n\frac23 \sum_{x \in \E^2_m} (a_0(x)^2+a_1(x)^2+a_2(x)^2) = \frac43 R_mR_n.
\end{align*}
Note that the first inequality holds because of Cauchy-Schwartz inequality and the fact that, by our labeling convention, $a_1(x)a_2(x)\geq 0$, and the last inequality holds because $ a_1^2(x) + a_2^2(x) \geq (a_1(x)+a_2(x))^2/2 = a_0^2(x)/2$.

\subsection{Lower estimate and potentials on $G^T_n$} \label{sec-lower}
\begin{figure}
\inviz{
\begin{tikzpicture}[scale=.7777]

\foreach \a in {0,1,2,...,5}{
\coordinate (\a) at ($(180+60*\a:4)$);

\draw ($(180+60*\a:4)$)--($(240+60*\a:4)$);

\draw ($(180+120*\a:4)$)--($(120*\a:4)$);

\draw[dotted] (90:0)--($.5*(180+60*\a:4)+.5*(120+60*\a:4)$);
\draw[dotted]($(180+60*\a:4)$)--($(120+60*\a:2)$);
\draw[dotted] ($(60*\a:2)$)--($(-60+60*\a:4)$);
}

\node at (210:2.3) {$\mathbb{1}-u$};
\node at (-90:2.25) {$\mathbb{1}-v$};
\node at (-30:2.25) {$v$};
\node at (30:2.25) {$u$};
\node at (90:2.25) {$w$};
\node at (150:2.3) {$\mathbb{1}-w$};

\draw[thick] (120:4)node[left] {$1/2$}--(180:4);
\draw[thick,double] (180:4) node[left] {$1$}--node[left] {$B^T$} (240:4)node[left] {$1$};
\draw[thick] (-60:4)node[right]{$1/2$}-- (0:4);
\draw[thick,double] (0:4)node[right] {$0$}--node[right] {$A^T$}(60:4)node[right] {$0$};

\end{tikzpicture}
}
\caption{ The function $u$, $v$ and $w$.}
\label{uvw}
\end{figure}
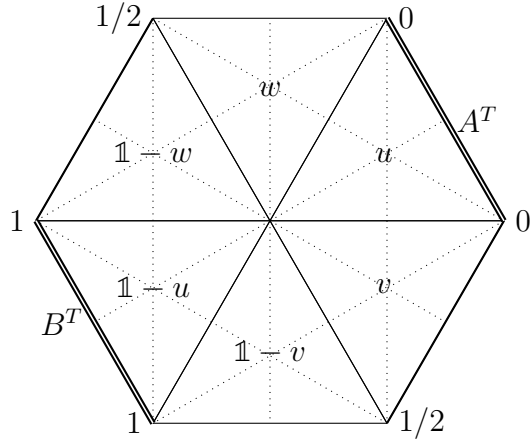

Let $\phi_n$ be the harmonic potential on $G^T_n$ with boundary values $0$ on $A^H_{(n)}$ and $1$ on $B^T_{(n)}$. On $G^T_{n-1}$, define $u = \phi_n \circ F_0$, $v= \phi_n\circ F_{1}$ and $w=\phi_n\circ F_5$. $\phi_n\circ F_2$, $\phi_n\circ F_3$, and $\phi_n\circ F_4$ can be written in terms of $u$, $v$, $w$ and the constant $\mathbb{1}$ function as illustrated in Figure~\ref{uvw}.

Notice that the function $w$ is $v\circ \sigma$ where $\sigma: G^T_{n-1}\to G^T_{n-1}$ is the symmetry which exchanges $p_0'$ and $p_2$, fixing $p_0$, and is extended to the rest of $G^T_{n-1}$ by averages (with respect to $F^T$, $\sigma$ is the flip about the horizontal axis). Also,
\[
R_n^{-1} = \eng_n^T(\phi_n) = 2(\eng_{n-1}^T(u)+\eng_{n-1}^T(v)+\eng_{n-1}^T(w)) = 2\eng_{n-1}^T(u)+4\eng_{n-1}^T(v).
\]

\begin{lemma}
$\eng_{n-1}^T(u,v-w) = 0$
\end{lemma}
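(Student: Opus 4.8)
The plan is to reduce the identity to two facts about the reflection $\sigma$: that it leaves the energy form invariant, and that $u$ is symmetric under it. The relation $w = v\circ\sigma$ is already available from the discussion preceding the lemma, and $\sigma$ is an involution. Granting the two facts, bilinearity gives
\[
\eng^T_{n-1}(u,\,v-w) = \eng^T_{n-1}(u,v) - \eng^T_{n-1}(u,\,v\circ\sigma),
\]
and invariance of the energy applied to the pair $(u,\,v\circ\sigma)$, together with $\sigma^2 = \mathrm{id}$, yields $\eng^T_{n-1}(u,\,v\circ\sigma) = \eng^T_{n-1}(u\circ\sigma,\,v)$; the symmetry $u = u\circ\sigma$ then turns this into $\eng^T_{n-1}(u,v)$, so the two contributions cancel and the lemma follows.

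The first fact is routine. The map $\sigma$ is a geometric symmetry of $\T_{n-1}$, hence an automorphism of $G^T_{n-1}$ sending $1$-simplexes to $1$-simplexes and $2$-simplexes to $2$-simplexes. Because the conductance $c^T_{q,q'}$ only records whether the edge $[q,q']$ is shared by two triangles (value $1$) or by one (value $2$), and $\sigma$ preserves this local incidence data, all conductances are preserved. Consequently $\eng^T_{n-1}(f\circ\sigma,\,g\circ\sigma) = \eng^T_{n-1}(f,g)$ for every pair of functions $f,g$.

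The substance of the argument is the second fact, $u = u\circ\sigma$. I would deduce it from a symmetry of $\phi_n$ itself. Let $\tau$ denote the reflection of the embedded graph $G^T_n$ whose axis runs through the midpoints of the boundary arcs $A^T_{(n)}$ and $B^T_{(n)}$. This $\tau$ fixes $A^T_{(n)}$ and $B^T_{(n)}$ setwise, so $\phi_n\circ\tau$ is harmonic on $G^T_n$ with the same boundary data as $\phi_n$; by uniqueness of the harmonic potential, $\phi_n\circ\tau = \phi_n$. The cell $F_0(G^T_{n-1})$ straddles the axis of $\tau$ symmetrically, and the self-map it induces on $G^T_{n-1}$, namely $F_0^{-1}\circ\tau\circ F_0$, is exactly the reflection $\sigma$; equivalently $\tau\circ F_0 = F_0\circ\sigma$. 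Composing, $u\circ\sigma = \phi_n\circ F_0\circ\sigma = \phi_n\circ\tau\circ F_0 = \phi_n\circ F_0 = u$.

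The step needing the most care is the geometric matching in the last paragraph: checking that the $F_0$-cell is invariant under the global reflection $\tau$ and that the induced involution there coincides with $\sigma$ rather than with some other symmetry of $G^T_{n-1}$. This is a bookkeeping verification against the embedding $F^T$ and the maps $F_i$, most easily confirmed from Figure \ref{uvw}, in which $\phi_n$ is visibly symmetric about the axis joining $A^T$ and $B^T$ and the $u$-cell sits centered on that axis.
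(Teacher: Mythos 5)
Your proof is correct and follows essentially the same route as the paper's: both arguments rest on $\sigma$ being a conductance-preserving involution so that the energy form is invariant, on $u\circ\sigma = u$, and on $w = v\circ\sigma$, whence the two contributions to $\eng^T_{n-1}(u,v-w)$ cancel (the paper packages this as $\eng^T_{n-1}(u,v-w) = \eng^T_{n-1}(u\circ\sigma, v\circ\sigma - w\circ\sigma) = -\eng^T_{n-1}(u,v-w)$). The only difference is that you supply a justification of the symmetry $u = u\circ\sigma$, via uniqueness of the harmonic potential under the global reflection $\tau$ of $G^T_n$ fixing $A^T_{(n)}$ and $B^T_{(n)}$ setwise together with the intertwining $\tau\circ F_0 = F_0\circ\sigma$, a fact the paper asserts without proof from the figure.
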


\begin{proof}
On one hand, $\eng_{n-1}^T(f) = \eng_{n-1}^T(f\circ \sigma)$ for all $f$ because $\sigma$ is a graph isometry with respect to the conductances. However, the function $u$ symmetric about the horizontal axis, i.e. $u(x) = u\circ \sigma(x)$, and $v$ and $w$ is anti-symmetric about this axis, i.e.  $(v\circ\sigma(x)-w\circ\sigma(x)) =(w(x)-v(x))$. Thus
$
\eng_{n-1}^T(u,v-w) = \eng_{n-1}^T(u\circ\sigma,v\circ\sigma-w\circ\sigma)
= -\eng_{n-1}^T(u,v-w).
$
\end{proof}

For each $x \in \E^{2}_{n-1}$, $i' \equiv i+1 \mod 6$ define $a^x=\mathsf b(x)$ to be the barycenter of $x$. Also, define $a_i^x$, $i=0,1,\ldots,5$ to be vertexes and barycenters of edges contained in $x$ ordered in such at way that $[a_i^x,a_{i'}^x]$ is an edge in $\E^{1}_{n}$, and that the vectors
\begin{align*}
&(v\circ F_\omega(a^x),v\circ F_\omega(a^x_i),v\circ F_\omega(a^x_{i'})) = (1/2,0,1/2),\\
&(w\circ F_\omega(a^x),w\circ F_\omega(a^x_i),w\circ F_\omega(a^x_{i'})) = (1/2,1/2,0), \quad\text{and}\\
&(u\circ F_\omega(a^x),u\circ F_\omega(a^x_i),u\circ F_\omega(a^x_{i'})) = (1/2,0,0),
\end{align*}
for $\omega\in \set{0,1,\ldots,5}^{n-1}$ such that $F_\omega([p_0,p_1,p_2]) = x$.
Notice that for all $x\in \E^{2}_{n-1}$, $a^x$ and $a_i^x$ are contained in $\E^{0}_n$, and that, for any function $f $ on $G^T_n$, then
\[
\eng^T_n(f) = \sum_{x\in E^{(2)}_{n-1}}
\sum_{i=0}^{5}\paren{ \paren{f(a^x)-f(a_i^x)}^2 - \frac12 (f(a_i^x)-f(a_{i'}^x))^2
}
\]
where $i' \equiv i+1 \mod 6$. We now define a function $f_{n+m}$ on $G^T_{n+m}$ such that $f_{n+m}|_{G_m^T} = \phi_m$ as follows: if $F_\omega: G^T_{n-1}\to G^T_{m+n}$ is the contraction mapping which takes $G_{n-1}^T$ to the $[a^x,a_i^x,a_{i'}^x]$, then
$f_{n+m}\circ F_\omega$ is equal to
\[
  (2\phi_m(a^x)-\phi_m(a_i^x)-\phi_m(a_{i'}^x))u+(\phi_m(a_{i}^x)-\phi_m(a_{i'}^x))(v-w) + \frac12(\phi_m(a_i^x)+\phi_m(a_{i'}^x))\mathbb{1}.
\]
and so
\begin{align*}
\eng_{n+m}^T(f_{n+m}) & = \sum_{\omega\in \set{0,\ldots,5}^{m+1}} \eng_{n-1}^T(f_{n+m}\circ F_\omega)\\
& = \sum_{x\in \E^{(2)}_n}\sum_{i=0}^5 (2\phi_m(a^x)-\phi_m(a_i^x)-\phi_m(a_{i'}^x))^2\eng^T_{n-1}(u) \\
& \phantom{=\sum \sum NN}+(\phi_m(a_{i}^x)-\phi_m(a_{i}^x))^2\eng^T_{n-1}(v-w)
\end{align*}
\begin{align*}
&\leq  \sum_{x\in \E^{(2)}_n}\sum_{i=0}^5 (2\phi_m(a^x)-\phi_m(a_i^x)-\phi_m(a_{i'}^x))^2\eng^T_{n-1}(u) \\
& \phantom{ = \sum \sum NN} +4(\phi_m(a_{i'}^x)-\phi_m(a_i^x))^2\eng^T_{n-1}(v)\\
&\leq  \sum_{x\in \E^{(2)}_n}\sum_{i=0}^5 2((\phi_m(a^x)-\phi_m(a_{i'}^x))^2+(\phi_m(a_i^x)-\phi_m(a_{i'}^x))^2)\eng^T_{n-1}(u)\\
 & \phantom{\leq \sum \sum NN}+4(\phi_m(a_{i'}^x)-\phi_m(a_i^x))^2\eng^T_{n-1}(v)\leq
\end{align*}
$$ 2\paren{2\eng^T_{n-1}(u){+}4\eng^T_{n-1}(w)}
\hskip-3pt  \sum_{x\in \E^{(2)}_n}\sum_{i=0}^5 \paren{(\phi_m(a^x){-}\phi_m(a_{i'}^x))^2{+}\frac12(\phi_m(a_i^x){-}\phi_m(a_{i'}^x))^2}.$$
This is less or equal to $2R_m^{-1}R_n^{-1}$, which implies that $R_{m+n} \geq R_mR_n/2$.



\subsection{Upper bound on $\rho$ by removing edges}\label{sec-short}

\begin{figure}
\inviz{
\hfill
\begin{tikzpicture}[scale=.8888]
\draw[dashed] (30:2.3)--(150:2.3);
\draw[thick,double] (150:2.3)--(270:2.3)--(30:2.3);
\draw[dashed] (-30:1.2)--(0,0)--(210:1.2);

\draw[double]  ($(0:.8)$)--($(60:.8)$);
\draw[double]  ($(60*1:.8)$)--($(60*1+60:.8)$);
\draw[double]  ($(60*2:.8)$)--($(60*2+60:.8)$);
\draw[double,dotted]  ($(60*3:.8)$)--($(60*4:.8)$);
\draw[double]  ($(60*4:.8)$)--($(60*4+60:.8)$);
\draw[double,dotted]  ($(60*5:.8)$)--($(0:.8)$);

\draw[dotted] ($(60*3+120*2:.8)$)--($(60*3+120*2:.8)+(30+60*3+120*2:.45)$);
\draw[dotted] ($(60+60*3+120*2:.8)$)--($(60+60*3+120*2:.8)+(30+60*3+120*2:.45)$);

\foreach \b in {0,1}{
\draw[double] ($(60*3+120*\b:.8)$)--($(60*3+120*\b:.8)+(30+60*3+120*\b:.45)$);
\draw[double] ($(60+60*3+120*\b:.8)$)--($(60+60*3+120*\b:.8)+(30+60*3+120*\b:.45)$);
}
\foreach \b in {0,1,2}{
\draw[fill=white] ($(60*3+120*\b:.8)+(30+60*3+120*\b:.45)$)      circle (3pt);
\draw[fill=white] ($(60+60*3+120*\b:.8)+(30+60*3+120*\b:.45)$)   circle (3pt);
}

\foreach \b in {0,1,2,...,6}{
\draw[fill=white] ($(60*\b:.8)$)                    circle (3pt);
}

\node at (180:1.8) {$L_1^{(1)}$};
\node at (240:1.8) {$L_0^{(1)}$};
\node at (300:1.8) {$L_5^{(1)}$};
\node at (0:1.8) {$L_4^{(1)}$};
\node at (90:.4) {$l_{1,2}$};
\node at (270:1) {$l_{1,1}$};

\end{tikzpicture}
\hfill
\begin{tikzpicture}[scale=.8888]

\node at (150:3.8) {$L_1^{(2)}$};
\node at (210:3.9) {$L_0^{(2)}$};
\node at (270:3.8) {$L_5^{(2)}$};
\node at (330:3.8) {$L_4^{(2)}$};
\node at (70:2.9) {$l_{2,4}$};
\node at (75:1.1) {$l_{2,3}$};
\node at (225:1.1) {$l_{2,2}$};
\node at (230:2.9) {$l_{2,1}$};

\foreach \a in {0,1}{
\draw[dashed] ($(60*\a:4)$)--($(60+60*\a:4)$);
\draw[dashed] (0,0)--($(-180+120*\a:4)$);

}
\foreach \a in {1,3,4,5}{
\draw[double] (0,0)--($(-180+60*\a:4)$);
}

\draw[thick,double] (-120:4)--(-60:4)--(0:4);
\draw[thick,double] (120:4)--(180:4)--(240:4);

\foreach \a in {0,1,3,5}{
\draw[dashed] ($(30+60*\a:2.31)$)--($(90+60*\a:2.31)$);
}

\foreach \a in {2,3,4,5}{
\draw[dashed] ($(30+60*\a:2.31)$)--($(30+60*\a:3.4)$);
}

\foreach \a in {0,1,...,6}{
\foreach \b in {0,1,2,...,6}{

\draw[dotted] ($(30+60*\a:2.31)+(60*\b:.8)$)--($(30+60*\a:2.31)+(60+60*\b:.8)$);
\draw[dotted] ($(30+60*\a:2.31)+(180+60*\a:.8)$)--($(90+60*\a:2.31)+(-60+60*\a:.8)$);
\draw[dotted] ($(30+60*\a:2.31)+(120+60*\a:.8)$)--($(90+60*\a:2.31)+(60*\a:.8)$);
\draw[dotted] ($(30+60*\a:2.31)+(60*\a:.8)$)--($(30+60*\a:2.31)+(60*\a:.8)+(30+60*\a:.45)$);
\draw[dotted] ($(30+60*\a:2.31)+(60+60*\a:.8)$)--($(30+60*\a:2.31)+(60+60*\a:.8)+(30+60*\a:.45)$);
}}

\foreach \a in {0,1,3,5}{
\foreach \b in {0,1,2,...,6}{

\draw ($(30+60*\a:2.31)+(180+60*\a:.8)$)--($(90+60*\a:2.31)+(-60+60*\a:.8)$);
\draw ($(30+60*\a:2.31)+(120+60*\a:.8)$)--($(90+60*\a:2.31)+(60*\a:.8)$);
}}

\foreach \a in {2,3,4,5}{
\foreach \b in {0,1,2,...,6}{

\draw ($(30+60*\a:2.31)+(60*\a:.8)$)--($(30+60*\a:2.31)+(60*\a:.8)+(30+60*\a:.45)$);
\draw ($(30+60*\a:2.31)+(60+60*\a:.8)$)--($(30+60*\a:2.31)+(60+60*\a:.8)+(30+60*\a:.45)$);
}}

\draw  ($(30+60*0:2.31)+(60*0:.8)$)--($(30+60*0:2.31)+(60+60*0:.8)$);\draw  ($(30:2.31)+(-60:.8)$)--($(30:2.31)+(60-60:.8)$);
\draw  ($(30:2.31)+(60*3:.8)$)--($(30:2.31)+(60+60*3:.8)$);
\draw  ($(30:2.31)+(60:.8)$)--($(30:2.31)+(60+60:.8)$);
\draw ($(30+60:2.31)+(60:.8)$)--($(30+60:2.31)+(60+60:.8)$);

\draw  ($(30+60*1:2.31)+(0:.8)$)--($(30+60*1:2.31)+(60:.8)$);
\draw  ($(30+60*1:2.31)+(60*1:.8)$)--($(30+60*1:2.31)+(60*1+60:.8)$);
\draw  ($(30+60*1:2.31)+(60*4:.8)$)--($(30+60*1:2.31)+(60*4+60:.8)$);
\draw  ($(30+60*1:2.31)+(60*2:.8)$)--($(30+60*1:2.31)+(60*2+60:.8)$);

\draw  ($(30+60*2:2.31)+(-60*0:.8)$)--($(30+60*2:2.31)+(-60*0-60:.8)$);
\draw  ($(30+60*2:2.31)+(-60*1:.8)$)--($(30+60*2:2.31)+(-60*1-60:.8)$);
\draw  ($(30+60*2:2.31)+(-60*4:.8)$)--($(30+60*2:2.31)+(-60*4-60:.8)$);
\draw  ($(30+60*2:2.31)+(-60*2:.8)$)--($(30+60*2:2.31)+(-60*2-60:.8)$);

\draw  ($(30+60*3:2.31)+(0:.8)$)--($(30+60*3:2.31)+(60:.8)$);
\draw  ($(30+60*3:2.31)+(60*1:.8)$)--($(30+60*3:2.31)+(60*1+60:.8)$);
\draw  ($(30+60*3:2.31)+(60*4:.8)$)--($(30+60*3:2.31)+(60*4+60:.8)$);
\draw  ($(30+60*3:2.31)+(60*2:.8)$)--($(30+60*3:2.31)+(60*2+60:.8)$);

\draw  ($(30+60*4:2.31)+(60*0:.8)$)--($(30+60*4:2.31)+(60+60*0:.8)$);
\draw  ($(30+60*4:2.31)+(-60:.8)$)--($(30+60*4:2.31)+(60-60:.8)$);
\draw  ($(30+60*4:2.31)+(60*3:.8)$)--($(30+60*4:2.31)+(60+60*3:.8)$);
\draw  ($(30+60*4:2.31)+(60:.8)$)--($(30+60*4:2.31)+(60+60:.8)$);

\draw  ($(30+60*5:2.31)+(60*0:-.8)$)--($(30+60*5:2.31)+(60+60*0:-.8)$);
\draw  ($(30+60*5:2.31)+(-60:-.8)$)--($(30+60*5:2.31)+(60-60:-.8)$);
\draw  ($(30+60*5:2.31)+(60*3:-.8)$)--($(30+60*5:2.31)+(60+60*3:-.8)$);
\draw  ($(30+60*5:2.31)+(60:-.8)$)--($(30+60*5:2.31)+(60+60:-.8)$);

\draw  ($(30+60*2:2.31)+(60:.8)$)--($(30+60*2:2.31)+(60+60:.8)$);


\foreach \a in {0,1,...,6}{
\foreach \b in {0,1,2,...,6}{
\draw[fill=white] ($(30+60*\a:2.31)+(60*\b:.8)$)                     circle (2.2222pt);
\draw[fill=white] ($(30+60*\a:2.31)+(60*\a:.8)+(30+60*\a:.45)$)      circle (2.2222pt);
\draw[fill=white] ($(30+60*\a:2.31)+(60+60*\a:.8)+(30+60*\a:.45)$)   circle (2.2222pt);
}}
\hfill~
\end{tikzpicture}
}
\caption{Gluing from $\widehat{G_1}$ to $\widehat{G_2}$.}\label{CutGraph}
\end{figure}
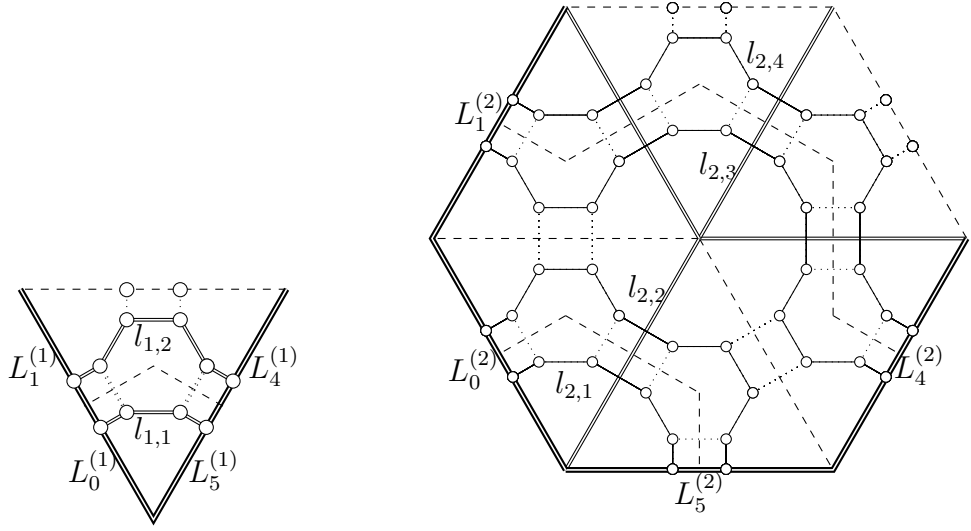


\begin{prop}
$R_n\leq (3/2)^n$, and thus $\rho\leq 3/2$.
\end{prop}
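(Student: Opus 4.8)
The plan is to obtain the upper bound by the \emph{open circuit} (edge-removal) principle combined with a self-similar series--parallel reduction. Since deleting edges from a resistor network can only increase the effective resistance between two fixed boundary sets (Rayleigh monotonicity, cf.\ the appendix in Section~\ref{sec-appendix}), it suffices to produce a subgraph $\widehat{G_n}$ of $G_n^H$ on the same vertex set, with the same boundary sets $A_{(n)}^H$ and $B_{(n)}^H$, whose effective resistance $\widehat R_n$ satisfies $\widehat R_n \le (3/2)^n$. Then $R_n \le \widehat R_n \le (3/2)^n$, and by Corollary~\ref{cor-rho} this gives $\log\rho = \lim_n \tfrac1n\log R_n \le \log(3/2)$, i.e.\ $\rho \le 3/2$.

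First I would specify exactly which edges are removed. Guided by Figure~\ref{CutGraph}, I would choose the cut to be compatible with the self-similarity maps $F_0,\dots,F_5$, so that $\widehat{G_n}$ restricted to the $i$-th cell is precisely $F_i(\widehat{G_{n-1}})$ and the six cells $F_i(\widehat{G_{n-1}})$ meet one another only along boundary vertices of the form $L_j^{(n)}$. The whole point of the deletions is to sever the internal adjacencies that would otherwise glue neighboring cells along their shared $1$-simplices, leaving a network in which the six copies communicate only through a controlled set of junctions; keeping the cut self-similar is what makes each cell a scaled copy of $\widehat{G_{n-1}}$ rather than of the uncut $G_{n-1}^H$.

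Next I would run the series--parallel analysis of $\widehat{G_n}$. With the cells joined only at the $L_j$ junctions, any current flowing from $A_{(n)}^H = L_0^{(n)}\cup L_1^{(n)}$ to $B_{(n)}^H = L_3^{(n)}\cup L_4^{(n)}$ must traverse the six hexagonally arranged cells in a fixed series--parallel pattern; reading this pattern off Figure~\ref{CutGraph}, it is two parallel chains, each consisting of three cells in series. Invoking the self-similar energy identity $\eng_n^H(f) = \sum_{i=0}^5 \eng_{n-1}^H(f\circ F_i)$ to identify each cell's contribution with a copy of $\widehat R_{n-1}$, the series law gives each chain resistance $3\widehat R_{n-1}$ and the parallel law then yields $\widehat R_n = \tfrac12\cdot 3\widehat R_{n-1} = \tfrac32\,\widehat R_{n-1}$.

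Finally, after checking the base case (computing $\widehat R_1 \le 3/2$ directly from the first-level graph, which is why Figure~\ref{CutGraph} depicts the gluing from $\widehat{G_1}$ to $\widehat{G_2}$), iterating the recursion gives $\widehat R_n = (3/2)^{n}\,\widehat R_0 \le (3/2)^n$, completing the proof. I expect the main obstacle to be Steps~2--3: one must verify that the prescribed deletions genuinely decouple $G_n^H$ into six cells meeting only at the $L_j$ junctions, and that the induced junction network is honestly series--parallel with the claimed ``three in series, two in parallel'' topology. Any surviving internal adjacency, or a mislabeling of which boundary arc of a cell is glued to which neighbor, would perturb the combinatorial factor away from $3/2$, so the careful bookkeeping of the cut and of the cell boundaries under $F_i$ is the heart of the argument.
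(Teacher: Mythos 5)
Your overall frame (remove edges, invoke Rayleigh monotonicity via Proposition~\ref{opengraphs}, keep the cut self-similar as in Figure~\ref{CutGraph}) is exactly the paper's, but your quantitative step fails, and in fact it proves an inequality in the wrong direction. After the cut, consecutive cells do \emph{not} meet at single junction vertices: the interface between two adjacent copies of $\widehat G_{n-1}$ consists of the many endpoints of the $2^{n-1}$ pairwise disjoint paths that make up $\widehat G_{n-1}$, matched one-to-one. The series law giving a chain resistance $3\widehat R_{n-1}$ is legitimate only when interfaces are single nodes (equipotential); collapsing a multi-vertex interface to a point is a \emph{shorting} operation, which (cf.\ the quotient construction in Proposition~\ref{quotientgraphs}) can only decrease resistance. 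So your lumped computation shows $\widehat R_n \geq \tfrac32 \widehat R_{n-1}$, which is useless for bounding $R_n$ from above. The failure is visible already at the first level: $\widehat G_1$ is two disjoint paths of lengths $2$ and $4$. If two such cells are glued path-to-path with the lengths crossed (as a reflection in the gluing can do), one gets two disjoint paths of length $6$, with resistance $(1/6+1/6)^{-1}=3$, strictly larger than the lumped series value $2\left(1/2+1/4\right)^{-1}=8/3$; equality in series--parallel lumping would require the path lengths to match proportionally across every interface, which they do not. Your appeal to the identity $\eng_n^H(f)=\sum_{i=0}^5\eng_{n-1}^H(f\circ F_i)$ does not rescue this, since the restriction of the global minimizer to a cell is not the cell's own minimizer with constant boundary data.

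The paper replaces the recursion with a global count plus convexity, and this is the idea your proposal is missing. By induction, $\widehat G_n$ is a \emph{disjoint union} of $2^n$ paths joining the two boundary arcs, with lengths $l_{n,1},\dots,l_{n,2^n}$; since $\widehat G_n$ is six glued copies of $\widehat G_{n-1}$ and $l_1=\{2,4\}$, the total length is $\sum_j l_{n,j}=6^n$. Disjoint paths in parallel give \emph{exactly} $\widehat R_n=\bigl(\sum_j l_{n,j}^{-1}\bigr)^{-1}$, and Jensen's inequality (convexity of $x\mapsto 1/x$, i.e.\ AM--HM) bounds this by the value attained when all $2^n$ lengths equal their mean $3^n$, namely $\widehat R_n\leq 3^n/2^n=(3/2)^n$; then $R_n\leq\widehat R_n$ by Proposition~\ref{opengraphs}, and $\rho\leq 3/2$ follows via Corollary~\ref{cor-rho} as you say. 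Note that this inequality goes the right way precisely because unequal path lengths make a parallel combination \emph{smaller} than the equal-length configuration --- the opposite monotonicity from the one your lumping argument implicitly used.
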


\begin{proof}
Figure~\ref{CutGraph} shows $\widehat G_1 = (V^H_1,\widehat E_1)$, which is obtained from  $G^H_1$ by removing all edges which have the vertexes contained in  $|[p'_0,p']|$ and $|[p_2',p']|$.
Figure~\ref{CutGraph} also illustrates how six $\widehat{G}_n$ graphs can be glued together to form one $\widehat{G}_{n+1}$ graph.  By induction, we see that each $\widehat{G}_n$ is made up of $2^n$ paths between $L_0^{(n,H)}\cup L_1^{(n,H)}$ and $L_5^{(n,H)}\cup L_4^{(n,H)}$.



The lengths of all the paths from $L_0^{(n,H)}\cup L_1^{(n,H)}$ and $L_5^{(n,H)}\cup L_4^{(n,H)}$ in $\widehat{G}_n$ can be encoded in a sequence of $2^n$ integers. We will call this sequence $l_n$ and write $l_n=\{l_{n,1},l_{n,2},...,l_{n,2^n}\}$ where $l_{n,j}$ is the length of the path which has initial (or terminal) point $j^{th}$ closest to $p_0$ (in graph or Euclidean distance with our embedding). Since $\widehat G_{n+1}$ is 6 copies of $\widehat G_{n}$ glued together, $\sum_{k=1}^{2^n} l_{n,k}  = 6 \sum_{k=1}^{2^{n-1}}l_{n-1,k} = 6^n$, because $l_1 = \{2,4\}$.

%

The corresponding resistance $\widehat{R}_n$ between $L_0^{(n,H)}\cup L_1^{(n,H)}$ and $L_5^{(n,H)}\cup L_4^{(n,H)}$ of the $\widehat{G}_n$ graph is the resistance of $2^n$ paths connected in parallel so, by Kirchhoff's laws,
$
\widehat{R}_n=\left(\sum_{j=1}^{2^n}\frac{1}{l_{n,j}}\right)^{-1}
$.
Using Jensen's inequality for the
convex function $x\mapsto 1/x$    on $(0,\infty)$, we have
\[
\widehat R_n=\frac{1}{2^n}\left(\frac{1}{2^n}\sum_{j=1}^{2^n}\frac{1}{l_{n,j}}\right)^{-1}\leq \frac{1}{2^n}
\frac{1}{2^n}\sum_{j=1}^{2^n}l_{n,j}
=\left(\frac{3}{2}\right)^n.
\]
To obtain an upper bound on $R_n$, we glue together 6 copies of $\widehat G_{n-1}$ to produce a graph which has an edge set contained in $E^H_n$  consisting of $2^n$ paths from $A_n^H$ to $B_n^H$. Then, using
Kirchhoff's laws
and
the above argument, we obtain that we are connecting $A_n^H$ to $B_n^H$ with $2$ parallel connections of three sequential wires of resistance $\widehat{R}_n$. Thus we have that $R_n \leq 3R_{n-1}/2 = (3/2)^n$.
\end{proof}

This also implies a lower bound on ${\rho^T} \geq 2/3$, which can also be obtained by considering the graph $\widetilde G^T_n$ with vertex set $\widetilde V_n^T$ where $x,y\in V_n^T$ (the vertex set of $G_n^T$) are identified as one vertex if an edge connecting $x$ and $y$ was deleted in the construction of $\widehat G_n$ with ``end'' points $P$ and $Q$, which correspond to the points in $A^T$ and $B^T$,
Figure~\ref{tildeGT}:

 \begin{figure}[hbt]

 \vskip-.5ex

{
\hfil~
\begin{tikzpicture}[scale=.88]
\draw[fill=black] (0,0) node(a) {} circle (4pt);
\draw (0,-.4) node {$P$};
\draw[fill=black] (2,0) node(b) {} circle (4pt);
\draw[fill=black] (4,0) node(c) {} circle (4pt);
\draw (4,-.4) node {$Q$};

\draw	  (a) to  [out=40,in=140] (b);
\draw	  (a) to [out=-40,in=-140] node[below] {$l_{1,1}$}(b);

\draw	  (b) to  [out=60,in=120] (c);
\draw	  (b) to [out=-60,in=-120]node[below] {$l_{1,2}$} (c);
\draw	  (b) to  [out=30,in=150] (c);
\draw	  (b) to [out=-30,in=-150] (c);
\end{tikzpicture}
\hfil~
\begin{tikzpicture}[scale=.88]
\draw (0,0)--(2,0)--(4,0)--(6,0)--(8,0);
\draw[fill=black] (0,0) node(a) {} circle (4pt);
\draw (0,-.4) node {$P$};
\draw[fill=black] (2,0) node(b) {} circle (4pt);
\draw[fill=black] (4,0) node(c) {} circle (4pt);
\draw[fill=black] (6,0) node(d) {} circle (4pt);
\draw[fill=black] (8,0) node(e) {} circle (4pt);
\draw (8,-.4) node {$Q$};

\draw	  (a) to  [out=40,in=140] (b);
\draw	  (a) to [out=-40,in=-140]node[below] {$l_{2,1}$} (b);
\draw	  (a) to  [out=20,in=160] (b);
\draw	  (a) to [out=-20,in=-160] (b);

\draw  	  (b) to  [out=40,in=140] (c)
		  (b) to  [out=-40,in=-140] node[below] {$l_{2,2}$} (c)
    	  (b) to  [out=10,in=170] (c)
		  (b) to  [out=-10,in=-170] (c)
    	  (b) to  [out=20,in=160] (c)
		  (b) to  [out=-20,in=-160](c)
    	  (b) to  [out=30,in=150] (c)
		  (b) to  [out=-30,in=-150] (c);
		
\draw  	  (c) to  [out=40,in=140] (d)
		  (c) to  [out=-40,in=-140] (d)
    	  (c) to  [out=10,in=170] (d)
		  (c) to  [out=-10,in=-170] (d)
    	  (c) to  [out=20,in=160] (d)
		  (c) to  [out=-20,in=-160] (d)
      	  (c) to  [out=50,in=130] (d)
		  (c) to  [out=-50,in=-130] (d)
      	  (c) to  [out=60,in=120] (d)
		  (c) to  [out=-60,in=-120] node[below] {$l_{2,3}$} (d)
    	  (c) to  [out=30,in=150] (d)
		  (c) to  [out=-30,in=-150] (d);
		
\draw  	  (d) to  [out=40,in=140] (e)
		  (d) to  [out=-40,in=-140] (e)
    	  (d) to  [out=10,in=170] (e)
		  (d) to  [out=-10,in=-170] (e)
    	  (d) to  [out=20,in=160] (e)
		  (d) to  [out=-20,in=-160] (e)
      	  (d) to  [out=50,in=130] (e)
		  (d) to  [out=-50,in=-130] (e)
      	  (d) to  [out=60,in=120] (e)
		  (d) to  [out=-60,in=-120] node[below] {$l_{2,4}$}(e)
    	  (d) to  [out=30,in=150] (e)
		  (d) to  [out=-30,in=-150] (e);
\end{tikzpicture}
\hfil~}

 \vskip-1.5ex

\caption{
 Short-circuited graphs     $\widetilde G^T_1$ and $\widetilde G^T_2$.
 \label{tildeGT}}
 \end{figure}
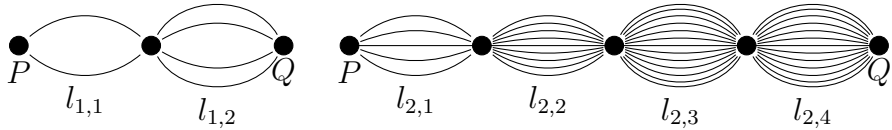

\subsection{Lower bound on $\rho$ by shorting graph}\label{sec-short2}

In this subsection we define a graph $\widetilde G_n^H$, such that $\widetilde V_n$ is $V_n^H$ modulo an equivalence relation. Thus, resistance in $\widetilde G_n^H$ between two sets is less than the resistance between the fibers of these sets.

\begin{prop}
  $R_n \geq c(5/4)^n$ for a constant $c$ independent of $n$, and so $\rho\geq 5/4$.
\end{prop}

\begin{proof}
We define that two vertexes in $\widetilde V^H_n$ to be equivalent if they are both contained in $F_\omega([p_i,p_{j}])$ where $j\equiv i+1 \mod 3$ for some $i$ and some $\omega \in \set{0,1,\ldots, 5}^k$. Figure~\ref{ShortGraph} shows $\widetilde G_2^H$.
%
These graphs appeared in \cite{Tep08}, as an example of non-p.c.f. Sierpinski gaskets, where it was determined that their resistance scaling factor is $\frac{5}{4}$. This implies that $\widetilde{R}_{n+1}=\frac{5}{4}\widetilde{R}_n$ and subsequently $\widetilde{R}_n=\widetilde{R}_1(\frac{5}{4})^{n-1}$. From this, and a gluing argument as in the previous subsection, it follows that $R_n\geq c (\frac{5}{4})^n$.
\end{proof}


Using a similar argument, $G^T_n$ can be obtained by identifying points in graph approximations of another non-p.c.f. Sierpinski gasket which appears at the end of \cite{Tep08}, and which is pictured in Figure~\ref{cutgraph}. The resistance between the corner points of these graphs is some constant times $(4/5)^n$, and thus the resistance between the corner points of $G^T_n$ is less than this value. This proves that $\rho^T \leq 4/5$ because including more points in the boundary decreases resistance, so the resistance between the corner points is greater than the resistance between $A$ and $B$.
Alternatively, if we connect the boundary points of the graph in the left of Figure~\ref{cutgraph} to a $\triangle$-network, it is dual to the network attained by connecting the corner points in the graph in the right of Figure~\ref{cutgraph} to a $Y$-network. This aslo explains why the resistances are reciprocal.

\begin{figure}
\inviz{
\begin{tikzpicture}[scale=.7777]


\foreach \a in {0,1,...,6}{
\foreach \b in {0,1,2,...,6}{

\draw ($(30+60*\a:2.31)+(60*\b:.8)$)--($(30+60*\a:2.31)+(60+60*\b:.8)$);
\draw ($(30+60*\a:2.31)+(180+60*\a:.8)$)--($(90+60*\a:2.31)+(60*\a:.8)$);
\draw ($(30+60*\a:2.31)+(120+60*\a:.8)$)--($(90+60*\a:2.31)+(-60+60*\a:.8)$);
\draw ($(30+60*\a:2.31)$)node {$\widetilde G_1^H$};
}}

\foreach \a in {0,1,2}{

\draw ($(30+120*\a:2.31)+(120*\a:.8)$)to[out=55+120*\a,in=-55+120*\a] ($(60+120*\a:4)$);
\draw ($(30+120*\a:2.31)+(60+120*\a:.8)$)to[out=65+120*\a,in=-65+120*\a]($(60+120*\a:4)$) ;
\draw ($(30+120*\a+60:2.31)+(120*\a+60:.8)$)to[out=55+120*\a,in=200+120*\a]($(60+120*\a:4)$);
\draw ($(30+120*\a+60:2.31)+(60+120*\a+60:.8)$)to[out=65+120*\a,in=185+120*\a]($(60+120*\a:4)$) ;
\draw[fill=white]($(60+120*\a:4)$) circle (2.2222pt);
}

\foreach \a in {0,1,...,6}{
\foreach \b in {0,1,2,...,6}{
\draw[fill=white] ($(30+60*\a:2.31)+(60*\b:.8)$)                     circle (2.2222pt);
\draw[fill=white] ($(-120+60*\a:2)$) circle (2.2222pt);
}}

\end{tikzpicture}
\quad
\begin{tikzpicture}[scale=.7777]

\foreach \a in {0,1,2}{
\draw   ($(90+120*\a:4)$)to[out=280+120*\a,in=80+120*\a] (0,0);
\draw   ($(90+120*\a:4)$)to[out=260+120*\a,in=100+120*\a] (0,0);
\draw   (0,0)to[out=-105+120*\a,in=105+120*\a] ($(-90+120*\a:2)$);
\draw   (0,0)to[out=-75+120*\a,in=75+120*\a] ($(-90+120*\a:2)$);
\draw (0:0)--($.5*(90+120*\a:4)+.5*(30+120*\a:2)$);
\draw (0:0)--($.5*(-30+120*\a:4)+.5*(30+120*\a:2)$);
\draw ($(90+120*\a:4)$)--($(38+120*\a:1)$);
\draw ($(90+120*\a:4)$)--($(142+120*\a:1)$);
\draw ($(-36+120*\a:1.85)$)--($(-90+120*\a:2)$);
\draw ($(216+120*\a:1.85)$)--($(-90+120*\a:2)$);
}

\draw (90:4)--(210:4)--(330:4) -- cycle;

\end{tikzpicture}
}%
\caption{Left: Graph $\widetilde G_2^H$ with short circuits. Right: Non-p.c.f Sierpinski gasket.}
\label{cutgraph}\label{ShortGraph}\vspace*{-10pt}
\end{figure}
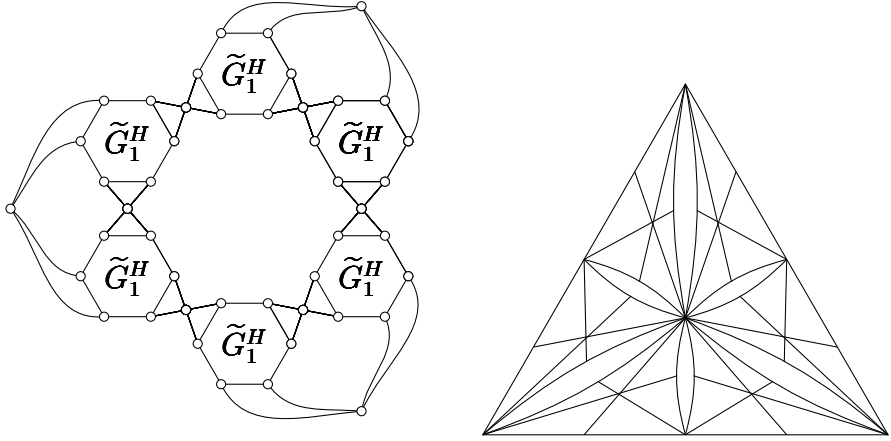

\section*{Acknowledgment}The authors  thank the anonymous reviewers for their valuable comments and suggestions to improve the quality of the paper.
%

\def\cprime{$'$}
\providecommand{\href}[2]{#2}
\providecommand{\arxiv}[1]{\href{http://arxiv.org/abs/#1}{arXiv:#1}}
\providecommand{\url}[1]{\texttt{#1}}
\providecommand{\urlprefix}{URL }


\end{document}